\documentclass{article}

\usepackage{array, amsmath,amssymb,amsthm}
\usepackage{amscd}
\usepackage[english]{babel}
\usepackage{hyperref}
\usepackage{color}
\usepackage{url}
\usepackage{marginnote}
 \usepackage[all]{xy}
 
\newtheorem{theorem1}{Special Theorem}

\newtheorem{theorem}[theorem1]{Theorem}
\newtheorem{notation}[theorem1]{Notation}

\newtheorem{proposition}[theorem1]{Proposition}
\newtheorem{consequence}[theorem1]{Consequence}
\newtheorem{definition}[theorem1]{Definition}
\newtheorem{remark}[theorem1]{Remark}
\newtheorem{example}[theorem1]{Example}
\newtheorem{lemma}[theorem1]{Lemma}
\newtheorem{corollary}[theorem1]{Corollary}



\newcommand{\segment}{\!\upharpoonright \!}

\newcommand{\N}{{\mathbb N}}    
\newcommand{\Z}{{\mathbb Z}}     
    
\newcommand{\FINZ}{{\+P_{<\omega}(\Z)}}

\newcommand{\Zhat}{\widehat{\Z}}    

\newcommand{\val}{\textit{Val}}     
\newcommand{\suc}{\textit{Suc}}     

\newcommand{\lcm}{\textit{lcm}}     
     
\newcommand{\divi}{\mid}

\def \go {\leavevmode
        \raise.3ex\hbox{$\scriptscriptstyle\langle\!\langle\,$}%
        ~\ignorespaces}

\def \gf {\relax \ifhmode \unskip~\else \leavevmode \fi
        \raise.3ex\hbox{$\scriptscriptstyle\,\rangle\!\rangle$}}


%
{\end{list}}%

%
{\end{list}}%



\begin{document}
\begin{center}
 {\Large \bf Arithmetical Congruence Preservation:
\\from Finite to Infinite}

\vskip 10pt
{\bf Patrick C\'EGIELSKI%
\newcounter{thanks}
\setcounter{thanks}{\value{footnote}}
\footnote{Partially supported by TARMAC ANR agreement  12 BS02 007 01.}}\\
{\small  LACL, EA 4219, Universit\'e Paris-Est Cr\'eteil, France}\\
{\tt cegielski@u-pec.fr}\\ 
\vskip 10pt
{\bf Serge GRIGORIEFF
\setcounter{thanks}{\value{footnote}}%
\footnotemark[\value{thanks}]}\\
{\small LIAFA, CNRS and Universit\'e Paris-Diderot, France}\\
{\tt seg@liafa.univ-paris-diderot.fr}\\ 
\vskip 10pt
{\bf Ir\`ene  GUESSARIAN
\setcounter{thanks}{\value{footnote}}%
\footnotemark[\value{thanks}]
\footnote{Emeritus at UPMC Universit\'e Paris 6. Corresponding author}}\\
{\small LIAFA, CNRS and Universit\'e Paris-Diderot, France}\\
{\tt ig@liafa.univ-paris-diderot.fr}\\ 
\end{center}
\vskip 30pt


\begin{abstract}
Various problems on integers lead to the  class of   congruence preserving functions on rings,  i.e.  functions verifying $a-b$ divides $f(a)-f(b)$ for all $a,b$.
We characterized these classes of functions
in terms of sums of rational polynomials (taking only integral values)
and the function giving the least common multiple of $1,2,\ldots,k$. The tool used to obtain these characterizations is ``lifting": if $\pi\colon X\to Y$ is a surjective morphism, and $f$ a function on $Y$
a lifting  of $f$ is a function $F$ on $X$ such that $\pi\circ F=f\circ\pi$.
In this paper we relate the finite and infinite notions by proving that the finite case
can be lifted to the infinite one.
For $p$-adic and profinite integers we  get similar characterizations via lifting.
We also prove that lattices of recognizable subsets of $\Z$ are stable under inverse image by congruence preserving functions.

\end{abstract}
\bibliographystyle{plain}

\section{Introduction}\label{s:intro}
%
%
A function $f$ (on $\N$ or $\Z$) is said to be congruence preserving if $a-b$ divides $f(a)-f(b)$.
Polynomial functions are obvious examples of congruence preserving functions.
In \cite{cgg14,cgg14a} we characterized this notion
(which we named ``functions having the integral difference ratio property'')
for functions $\N\to\Z$ and $\Z\to\Z$.
In \cite{cgg15} we extended the characterization to functions 
$\Z/n\Z\to\Z/m\Z$
(for a suitable extension of the notion of congruence preservation).

In the present paper, we prove in \S\ref{s:lift} that 
every congruence preserving function $\Z/n\Z\to\Z/n\Z$
can be lifted to a congruence preserving function $\N\to\N$
(i.e. it is the projection of such a function).
As a corollary  
(i) we show that such a lift also works replacing $\N$  with  $\Z/qn\Z$
and (ii) and we give an alternative proof of a representation
(obtained in \cite{cgg15})
of congruence preserving functions $\Z/n\Z\to\Z/m\Z$
as linear sums of ``rational'' polynomials.

In \S\ref{s:profinite} we consider the rings of $p$-adic integers
(resp. profinite integers)
and prove that  congruence preserving functions are inverse limits of 
congruence preserving functions on the $\Z/p^k\Z$ (resp. on the $\Z/n\Z$).
Considering the Mahler representation of continuous functions by
Newton series, we prove that congruence preserving functions
correspond to those series for which the linear coefficient with rank $k$
is divisible by the least common multiple of $1,\ldots,k$.

We proved in \cite{cgg14ipl} that lattices of regular subsets of $\N$ are closed under inverse image by congruence preserving functions: in  \S\ref{s:lattices}, we  extend this result to functions $\Z\to\Z$.

\section{Congruence preservation: exchanging finite and infinite}
\label{s:lift}
%
%
We characterize congruence preserving functions on  $\Z/n\Z$ by
 first lifting each such function  into a
congruence preserving function $\N \to \N$. In a second step, we use our characterization of congruence preserving functions $\N\to\Z$ to characterize 
the congruence preserving functions $\Z/n\Z \to \Z/n\Z$.

\begin{definition}\label{def:idr}
Let $X$ be a subset of a commutative (semi-)ring $(R,+,\times)$.
A function $f\colon X\to R$ is said to be congruence preserving if
\begin{eqnarray*}\label{eq:idr}
\forall x,y\in X\quad \exists d\in R\quad f(x)-f(y)\ =d(x-y)\,,\quad \text{\rm i.e.\ } x-y \text{ divides } f(x)-f(y)\,.
\end{eqnarray*}
\end{definition}
\begin{definition}[Lifting]\label{def:lift} 
Let $\sigma\colon X\to N$ and $\rho\colon Y\to M$ be surjective maps.
A function $F\colon X \to Y$  is said to be a 
{\em $(\sigma,\rho)$-lifting } of a function $f\colon N \to M$
(or simply {\em lifting} if $\sigma,\rho$ are clear from the context)
if the following diagram commutes:
\\\centerline{\quad$\begin{CD}
 X      @>\text{\normalsize$F$}>>   Y  \\
@V\text{\normalsize$\sigma$}VV    
@VV\text{\normalsize$\rho$} V\\
N    @>\text{\normalsize$f$}>>  M
\end{CD}$
\qquad
i.e. \quad $\rho\circ F= f\circ \sigma$\,.}
\end{definition}

We will  consider  elements of $\Z/k\Z$ as  integers
and vice versa via the following maps.
\begin{notation}\label{def:pi n}
1.  Let $\pi_k\colon \Z\to\Z/k\Z$ be the canonical surjective homomorphism  associating to an integer its class in $\Z/k\Z$.
\\
2. Let $\iota_k\colon\Z/k\Z\to\N$ be the injective map 
 associating to an element $x\in\Z/kZ$
its representative in $\{0,\ldots,k-1\}$. 
\\
3. Let $\pi_{n,m} \colon  \Z/n\Z \to \Z/m\Z$ be the 
 map $\pi_{n,m}=\pi_m\circ\iota_n$. In case $m$ divides $n$, $\pi_{n,m}$ is a surjective homomorphism.
 
 If $m\leq n$ let $\iota_{m,n} \colon  \Z/m\Z \to \Z/n\Z$ be the 
injective map $\iota_{m,n}=\pi_n\circ\iota_m$.
\end{notation}
\begin{lemma}\label{l:utile} If $m$ divides $n$, $\pi_{m}=\pi_{n,m}\circ \pi_{n}$.
\end{lemma}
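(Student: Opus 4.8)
The plan is to verify the identity pointwise on $\Z$ by unwinding the definitions of the three maps involved. Since both $\pi_m$ and $\pi_{n,m}\circ\pi_n$ are functions $\Z\to\Z/m\Z$, it suffices to show that they agree on every integer $a$. First I would expand the right-hand side using the definition $\pi_{n,m}=\pi_m\circ\iota_n$ from Notation~\ref{def:pi n}, obtaining
\[
\pi_{n,m}\circ\pi_n=\pi_m\circ\iota_n\circ\pi_n\,.
\]

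The key observation is that the composite $\iota_n\circ\pi_n\colon\Z\to\N$ is exactly the ``remainder'' map: for $a\in\Z$, the value $\iota_n(\pi_n(a))$ is the unique $r\in\{0,\ldots,n-1\}$ with $a\equiv r\pmod n$, so that $r=a-qn$ for some $q\in\Z$. The heart of the argument---and the only place where the hypothesis $m\divi n$ is used---is to descend this congruence modulo $n$ to a congruence modulo $m$. Writing $n=m\ell$, from $a-r=qn=qm\ell$ we get $a\equiv r\pmod m$, hence
\[
\pi_m(a)=\pi_m(r)=\pi_m(\iota_n(\pi_n(a)))=(\pi_{n,m}\circ\pi_n)(a)\,.
\]
Since $a$ was arbitrary, the two maps coincide, which is the claimed identity.

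I expect no genuine obstacle here: the statement is a routine diagram chase, and the only point to keep in mind is that $\iota_n$ is merely a set-theoretic section of $\pi_n$ (not a homomorphism), so one should argue at the level of representatives rather than trying to manipulate $\pi_{n,m}$ directly as a homomorphism. The divisibility $m\divi n$ is precisely what guarantees that the particular choice of representative in $\{0,\ldots,n-1\}$ is irrelevant once one reduces modulo $m$, which is what makes the composite well behaved and yields the equality.
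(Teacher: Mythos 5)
Your proof is correct, and it matches what the paper intends: Lemma~\ref{l:utile} is stated there without proof, being regarded as immediate from Notation~\ref{def:pi n}, and your pointwise verification --- identifying $\iota_n\circ\pi_n$ as the remainder-mod-$n$ map and using $m\mid n$ to descend the congruence $a\equiv r\pmod n$ to $a\equiv r\pmod m$ --- is exactly the routine unwinding being taken for granted. Your closing observation that $\iota_n$ is only a set-theoretic section of $\pi_n$, so the divisibility hypothesis is precisely what makes the composite independent of the choice of representative, correctly pinpoints the one place where care is needed.
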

The next theorem insures that congruence preserving functions
$\Z/n\Z\to\Z/n\Z$ can be lifted to congruence preserving functions
$\N\to\Z$.
\begin{theorem}
[Lifting functions $\Z/n\Z\to\Z/n\Z$ to $\N\to\N$]
\label{thm:lift nn}  
Let $f\colon \Z/n\Z \to \Z/n\Z$ with $m\geq 2$. The following conditions are equivalent:
\begin{enumerate}
\item
 $f$ is congruence preserving.
\item
$f$ can be $(\pi_n,\pi_n)$-lifted to a congruence preserving 
function $F:\N\to\N$.
\end{enumerate}
\end{theorem}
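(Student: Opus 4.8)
For the direction $(2)\Rightarrow(1)$ there is nothing subtle: if $F\colon\N\to\N$ is a congruence preserving lifting, then given $x,y\in\Z/n\Z$ I pick $a,b\in\N$ with $\pi_n(a)=x$, $\pi_n(b)=y$, write $F(a)-F(b)=d(a-b)$ for some $d\in\Z$ by congruence preservation of $F$, and apply $\pi_n$; the lifting relation $\pi_n\circ F=f\circ\pi_n$ turns this into $f(x)-f(y)=\pi_n(d)(x-y)$ in $\Z/n\Z$, so $f$ is congruence preserving. The whole content is the converse $(1)\Rightarrow(2)$, and the plan is to build $F$ by induction on its argument, fixing the values $F(0),F(1),F(2),\dots\in\N$ one at a time while maintaining two invariants: (a) $F(i)\equiv\iota_n(f(\pi_n(i)))\pmod n$, and (b) $(i-j)\mid(F(i)-F(j))$ for all $j<i$. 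Invariant (a) forces $F$ to be a lifting of $f$, while (b) says exactly that every initial segment of $F$ is congruence preserving.

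At stage $i$, assuming $F(0),\dots,F(i-1)$ are chosen, I must find $F(i)$ satisfying the system of congruences $F(i)\equiv F(j)\pmod{i-j}$ for $j=0,\dots,i-1$ together with the extra congruence $F(i)\equiv f(\pi_n(i))\pmod n$. First I would dispatch the sub-system coming from (b): for $j,j'<i$ one has $\gcd(i-j,\,i-j')\mid(j'-j)$ and, by invariant (b) at the previous stage, $(j'-j)\mid(F(j')-F(j))$, so the pairwise Chinese Remainder compatibility $F(j)\equiv F(j')\pmod{\gcd(i-j,i-j')}$ holds. Since the moduli $i-j$ run through $1,2,\dots,i$, this sub-system therefore admits a solution $v$, unique modulo $\lcm(1,\dots,i)$.

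The remaining task, and the genuine obstacle, is to splice in the congruence modulo $n$: the combined system is solvable precisely when
\[
v\equiv f(\pi_n(i))\pmod{\gcd(\lcm(1,\dots,i),\,n)}.
\]
The hard part is to extract this from the hypothesis alone, and the decisive idea is to verify it one prime at a time via a well‑chosen earlier index. Fix a prime $p$ dividing $g:=\gcd(\lcm(1,\dots,i),n)$ and let $p^a$ be the exact power of $p$ in $g$, so $a=\min(v_p(\lcm(1,\dots,i)),v_p(n))$ where $v_p$ is the $p$-adic valuation. Since the $p$-part of $\lcm(1,\dots,i)$ is $p^{\lfloor\log_p i\rfloor}$, we have $p^a\le i$, so $j:=i-p^a$ lies in $\{0,\dots,i-1\}$. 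By construction $v\equiv F(j)\pmod{p^a}$, and as $p^a\mid n$ invariant (a) gives $F(j)\equiv f(\pi_n(j))\pmod{p^a}$. On the other hand $\pi_n(i)-\pi_n(j)=\pi_n(p^a)$ with $\gcd(p^a,n)=p^a$, so congruence preservation of $f$ forces $p^a\mid(f(\pi_n(i))-f(\pi_n(j)))$. Chaining these congruences yields $v\equiv f(\pi_n(i))\pmod{p^a}$; ranging over all primes $p\mid g$ and applying the Chinese Remainder Theorem gives the displayed compatibility.

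With compatibility in hand the full system for $F(i)$ is solvable, its solutions forming a single residue class modulo $\lcm(\lcm(1,\dots,i),n)$, and I simply take $F(i)$ to be its least nonnegative member; this keeps $F$ valued in $\N$, so nonnegativity is automatic rather than a separate argument. Both invariants persist by construction, whence the resulting $F\colon\N\to\N$ is congruence preserving and, since $\pi_n(F(i))=f(\pi_n(i))$ for all $i$, is a $(\pi_n,\pi_n)$-lifting of $f$. The only delicate point in the whole proof is the prime-by-prime compatibility step, where the choice $j=i-p^a$ is exactly what lets the congruence preservation of $f$ be invoked with a difference ($p^a$) that simultaneously divides $n$ and is at most $i$; everything else is bookkeeping with the Chinese Remainder Theorem.
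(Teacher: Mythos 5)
Your proof is correct, and its skeleton is the paper's: the easy direction $(2)\Rightarrow(1)$ is the same computation (the paper just uses the specific representatives $\iota_n(x),\iota_n(y)$), and for $(1)\Rightarrow(2)$ both proofs build $F$ value by value, at stage $i$ solving the system $F(i)\equiv F(j)\pmod{i-j}$ for $j<i$ together with $F(i)\equiv \iota_n(f(\pi_n(i)))\pmod n$. Where you genuinely diverge is the solvability check, which is the heart of the argument. The paper (proving the more general $\Z/n\Z\to\Z/m\Z$ version with $m\mid n$) feeds the whole system to the Generalized Chinese Remainder Theorem and verifies \emph{every} pairwise condition: for each $j<i$ it shows $\gcd(i-j,m)$ divides $\iota_m(f(\pi_n(i)))-F(j)$ by invoking congruence preservation of $f$ at the difference $i-j$ itself. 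You instead collapse the congruence-preservation subsystem into one congruence modulo $L=\lcm(1,\dots,i)$ with solution $v$, reduce to the two-modulus compatibility $v\equiv\iota_n(f(\pi_n(i)))\pmod{\gcd(L,n)}$, and verify it prime by prime, invoking congruence preservation of $f$ only at prime-power differences $p^a=i-j$ with $p^a\mid n$; the valuation identity $v_p(L)=\lfloor\log_p i\rfloor$ guarantees $p^a\le i$, so the needed index $j=i-p^a$ exists. In effect you show the paper's family of pairwise checks is redundant: the checks at prime-power differences dividing $n$ already force all the others. The paper's route buys uniformity (one GCRT application, no prime decomposition, and it delivers the $m\mid n$ generalization needed later for Corollary~\ref{cor:mainZ/nZ}); your route buys a sharper accounting of what is actually used, at the cost of the lcm/valuation bookkeeping. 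One presentational point: when you assert $p^a\mid(f(\pi_n(i))-f(\pi_n(j)))$, the hypothesis literally gives divisibility by $\pi_n(p^a)$ inside $\Z/n\Z$; to get integer divisibility of the representatives one must pass through $\iota_n$ and use $p^a\mid n$ (the paper does this explicitly in its equations \eqref{eq:pf3}--\eqref{eq:pf4}), and your parenthetical $\gcd(p^a,n)=p^a$ indicates you saw this, but it deserves a line in a written-up version.
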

In view of applications in the context of $p$-adic and profinite integers,
we state  and prove a slightly more general version  with an extended notion of congruence preservation defined below.

\begin{definition}\label{def:cp ZnZm}
A function $f:\Z/n\Z\to\Z/m\Z$ is congruence preserving if
\begin{equation}\label{eq:cp ZnZm}
\text{for all $x,y\in \Z/n\Z$,\quad
 $\pi_{n,m} (x-y)$ divides $f(x)-f(y)$ in $\Z/m\Z$}\,.
\end{equation}
\end{definition}
\begin{theorem} [Lifting functions $\Z/n\Z\to\Z/n\Z$ to $\N\to\N$]\label{thm:lift nm}  
Let $f\colon \Z/n\Z \to \Z/m\Z$ with $m$ divides $n$ and $m\geq 2$. The following conditions are equivalent:
\begin{enumerate}
\item
 $f$ is congruence preserving.
\item
$f$ can be $(\pi_n,\pi_m)$-lifted to a congruence preserving function $F:\N\to\N$.
\item
$f$ can be $(\pi_n,\pi_m)$-lifted to a congruence preserving function $F:\N\to\Z$.
\end{enumerate}
\end{theorem}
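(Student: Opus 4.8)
The plan is to prove the cycle $(2)\Rightarrow(3)\Rightarrow(1)\Rightarrow(2)$, with essentially all the content concentrated in the construction $(1)\Rightarrow(2)$. The implication $(2)\Rightarrow(3)$ is immediate, since a map $F\colon\N\to\N$ is in particular a map $F\colon\N\to\Z$, and neither the congruence-preservation condition $a-b\divi F(a)-F(b)$ nor the lifting identity $\pi_m\circ F=f\circ\pi_n$ is affected by enlarging the codomain. For $(3)\Rightarrow(1)$ I would fix $x,y\in\Z/n\Z$, pick representatives $a,b\in\N$ with $\pi_n(a)=x$, $\pi_n(b)=y$, and use congruence preservation of $F$ to write $F(a)-F(b)=d(a-b)$ with $d\in\Z$. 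Applying $\pi_m$ together with the lifting identity gives $f(x)-f(y)=\pi_m(d)\,\pi_m(a-b)$; since $m$ divides $n$, Lemma~\ref{l:utile} yields $\pi_m(a-b)=\pi_{n,m}(\pi_n(a-b))=\pi_{n,m}(x-y)$, so $\pi_{n,m}(x-y)$ divides $f(x)-f(y)$ in $\Z/m\Z$, which is exactly Definition~\ref{def:cp ZnZm}.

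The heart of the argument is $(1)\Rightarrow(2)$, where I would define $F$ by induction on its argument. Write $w(i)=\iota_m(f(\pi_n(i)))\in\{0,\dots,m-1\}$, so that $w(i)$ depends only on $i\bmod n$, and observe that the two requirements on $F$ translate into two families of congruences: congruence preservation amounts to $F(a)\equiv F(a-d)\pmod{d}$ for every $1\le d\le a$, while the lifting identity amounts to $F(a)\equiv w(a)\pmod{m}$. Assuming $F(0),\dots,F(a-1)$ are already chosen in $\N$ so as to satisfy these conditions below $a$, I would pick $F(a)$ to be a nonnegative solution of the finite system with moduli $1,\dots,a$ and $m$. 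Such a solution exists and fills a full residue class modulo $\lcm(1,\dots,a,m)$ (in particular meeting $\N$) precisely when the system is pairwise consistent, i.e.\ when the prescribed residues agree modulo the gcd of each pair of moduli; this is the generalized Chinese Remainder Theorem.

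The main obstacle is thus the verification of pairwise consistency, and this is exactly where the hypotheses enter. Consistency among two moduli $d_1,d_2\in\{1,\dots,a\}$ is automatic from the induction hypothesis, since $\gcd(d_1,d_2)$ divides $d_1-d_2$, which already divides $F(a-d_2)-F(a-d_1)$ by congruence preservation of the earlier values. Consistency of $m$ with a modulus $d$ requires $w(a)\equiv F(a-d)\pmod{\gcd(m,d)}$; as $F(a-d)\equiv w(a-d)\pmod{m}$ by hypothesis, it suffices to show $\gcd(m,d)$ divides $w(a)-w(a-d)$. Putting $r=a\bmod n$ and $s=(a-d)\bmod n$, the congruence preservation of $f$ (Definition~\ref{def:cp ZnZm}), read on representatives via Lemma~\ref{l:utile} and the standard description of divisibility in $\Z/m\Z$, gives $\gcd(r-s,m)\divi w(r)-w(s)=w(a)-w(a-d)$. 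Finally $r-s\equiv d\pmod{n}$, and because $m$ divides $n$ this forces $r-s\equiv d\pmod{m}$, whence $\gcd(r-s,m)=\gcd(d,m)$; this is the one point where the hypothesis $m\divi n$ is indispensable. With every pair consistent, the construction of $F(a)$ goes through, the induction produces a congruence preserving $F\colon\N\to\N$ lifting $f$, and the special case $m=n$ recovers Theorem~\ref{thm:lift nn}.
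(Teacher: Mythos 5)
Your proposal is correct and takes essentially the same approach as the paper: the trivial $(2)\Rightarrow(3)$, the projection argument via $\pi_m$ and Lemma~\ref{l:utile} for $(3)\Rightarrow(1)$, and, for $(1)\Rightarrow(2)$, an inductive construction of $F$ whose step is solved by the generalized Chinese Remainder Theorem with exactly the same two pairwise-consistency verifications (gcd of two decrement moduli dividing a difference already controlled by congruence preservation, and $\gcd(d,m)$ handled through congruence preservation of $f$ read on representatives). The only difference is cosmetic: the paper builds a chain of partial functions $\varphi_t$ on $\{0,\ldots,t\}$ where you define $F(a)$ directly by induction on $a$.
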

\begin{proof}
$(2)\Rightarrow(3)$ is trivial.
\\
$(3)\Rightarrow(1)$.
Assume $f$ lifts to the congruence preserving function $F:\N\to\Z$. The following diagram commutes\\
$$\begin{CD}
 \N      @>\text{\normalsize$F$}>>   \Z  \\
@V\text{\normalsize$\pi_n$}VV        @VV\text{\normalsize$\pi_m$}V\\
\Z/n\Z    @>\text{\normalsize$f$}>>  \Z/m\Z
\end{CD} \quad \text{and thus \ }
\left\{\begin{array}{rcl}
\pi_m\circ F&=& f\circ \pi_n\\ 
 f&=&\pi_m\circ F\circ\iota_n
\end{array}\right.
$$
Let $x,y\in\Z/n\Z$.
As $F$ is congruence preserving, 
$\iota_n(x)-\iota_n(y)$ divides $F(\iota_n(x))-F(\iota_n(y))$,
hence  $F(\iota_n(x))-F(\iota_n(y))=(\iota_n(x)-\iota_n(y))\,\delta$.
Since $\pi_m$ is a morphism 
and $\pi_m\circ\iota_n=\pi_{n,m}$, we get
$\pi_m(F(\iota_n(x)))-\pi_m(F(\iota_n(x)))=\pi_{n,m}(x-y)\,\pi_{n,m}(\delta)$.
As $F$ lifts $f$ we have 
$\pi_m(F(\iota_n(x)))-\pi_m(F(\iota_n(y)))
=f(x)-f(y)$ whence (1).
\smallskip\\
$(1)\Rightarrow(2)$.
By induction on $t\in\N$ we define a sequence of functions
$\varphi_t\colon \{0,\ldots,t\}\to\N$ for $t\in\N$ such that $\varphi_{t+1}$ extends $\varphi_t$ and (*) and (**) below hold.
$$
\left\{\text{\begin{tabular}{cl}
(*)&\quad$\varphi_t$ is congruence preserving,\\
(**)&\quad $\pi_m(\varphi_t(u))= f(\pi_n(u))$ 
for all $u\in\{0,\ldots,t\}$.
\end{tabular}}\right.
$$
{\it Basis.}
We choose $\varphi_0(0)\in\N$ such that 
$\pi_m(\varphi_0(0))= f(\pi_n(0))$.
Properties  (*) and (**) clearly hold for $\varphi_0$.
\\
{\it Induction: from $\varphi_t$ to $\varphi_{t+1}$}.
Since the wanted $\varphi_{t+1}$ has to extend $\varphi_t$ to the domain
$\{0,\ldots,t,t+1\}$,
we only have to find a convenient value for $\varphi_{t+1}(t+1)$.
\\
By the induction hypothesis, (*) and (**) hold for $\varphi_t$;
in order for $\varphi_{t+1}$ to satisfy (*)  and (**), we have to find
 $\varphi_{t+1}(t+1)$  such that
$t+1-i$ divides $\varphi_{t+1}(t+1)-\varphi_{t}(i)$, for $i=0,\ldots,t$,
and $\pi_m(\varphi_{t+1}(t+1))= f(\pi_n(t+1))$.
Rewritten in terms of congruences,
these conditions amount to say that $\varphi_{t+1}(t+1)$ is a solution
of the following system of congruence equations:
\begin{equation}\label{system0}
\left.\begin{array}{r|rcll}
\text{$\star$(0)\ \qquad}
&\varphi_{t+1}(t+1)&\equiv& \varphi_t(0)&\quad\pmod {t+1}\\
&&{\footnotesize\vdots}\\
\text{$\star$(i)\ \qquad}
&\varphi_{t+1}(t+1)&\equiv& \varphi_t(i)&\quad\pmod {t+1-i}\\
&&{\footnotesize\vdots}\\
\text{$\star$(t-1)\ \qquad}
&\varphi_{t+1}(t+1)&\equiv& \varphi_t(t-1)&\quad\pmod 2\\
\text{$\star\star$\ \qquad}
&\varphi_{t+1}(t+1)&\equiv& \iota_m(f(\pi_n(t+1)))&\quad\pmod m
\end{array}\right\}
\end{equation}
Recall the Generalized Chinese Remainder Theorem
(cf. \S3.3, exercice 9 p. 114, in Rosen's textbook~\cite{rosen84}):
a system of congruence equations
$$
\bigwedge_{i=0,\ldots,t} x\equiv a_i\pmod {n_i}
$$
has a solution
if and only if $a_i\equiv a_j\mod\gcd(n_i,n_j)$ for all 
$0\leq i<j\leq t$.

Let us show that the conditions of application of 
the Generalized Chinese Remainder Theorem
are satisfied for system \eqref{system0}.
\begin{itemize}
\item
{Lines $\star$(i) and $\star$(j) of system \eqref{system0}
(with $0\leq i<j\leq t-1$).}\\
Every common divisor to $t+1-i$ and $t+1-j$ divides
their difference $j-i$ hence $\gcd(t+1-i,t+1-j)$ divides $j-i$.
Since $\varphi_t$ satisfies (*),  $j-i$ divides $\varphi_t(j)-\varphi_t(i)$
and a fortiori $\gcd(t+1-i,t+1-j)$ divides $\varphi_t(j)-\varphi_t(i)$.
\item
{Lines $\star$(i) and $\star\star$ of system \eqref{system0} (with $0\leq i\leq t-1$).}\\
Let $d=\gcd(t+1-i,m)$.
We have to show that $d$ divides $\iota_m(f(\pi_n(t+1)))-\varphi_t(i)$.
Since $f$ is congruence preserving, $\pi_{n,m}(\pi_n(t+1)-\pi_n(i))$ divides $f(\pi_n(t+1))-f(\pi_n(i))$.
As $m$ divides $n$, by Lemma \ref{l:utile}, $\pi_{n,m}(\pi_n(t+1)-\pi_n(i))=\pi_m(t+1)-\pi_m(i)=\pi_m(t+1-i)$ and
$f(\pi_n(t+1))-f(\pi_n(i))=k\pi_m(t+1-i)$ for some $k\in \Z/m\Z$.
Applying $\iota_m$,
there exists $\lambda\in\Z$ such that
\begin{equation*}
\iota_m(f(\pi_n(t+1)))-\iota_m(f(\pi_n(i)))
=\iota_m(k)\iota_m(\pi_m(t+1-i))+\lambda m
\end{equation*}
  as   $\iota_m(\pi_m(u)) \equiv u\pmod m$  for every $u\in\Z$, there exists $\mu\in\Z$ such that
\begin{equation}\label{eq:pf3}
\iota_m(f(\pi_n(t+1)))-\iota_m(f(\pi_n(i)))
=\iota_m(k) (t+1-i) + \mu m + \lambda m\,.
\end{equation}
Since $\varphi_t$ satisfies (**), we have $\pi_m(\varphi_t(i)\!)\!=\!f(\pi_n(i)\!)$
\quad hence\\ $\varphi_t(i)\equiv\iota_m(f(\pi_n(i)))\pmod m$.
Thus equation \eqref{eq:pf3}  can be rewritten
\begin{equation}\label{eq:pf4}
\iota_m(f(\pi_n(t+1)))-\varphi_t(i)
=(t+1-i)\iota_m(k) +\nu m  \text{\quad for some $\nu$}\,.
\end{equation}
As $d$ divides $m$ and $t+1-i$, \eqref{eq:pf4} shows that  
$d$ divides $\iota_n(f(\pi_n(t+1)))-\varphi_t(i)$
as wanted.
\end{itemize}
Thus, we can apply the Generalized Chinese Theorem and get the wanted
value of $\varphi_{t+1}(t+1)$, concluding the induction step.
\smallskip\\
Finally, taking the union of the $\varphi_t$'s, $t\in\N$, we get a function
$F:\N\to\N$ which is congruence preserving and lifts $f$.
\end{proof}
\begin{example}[counterexample to Theorem \ref{thm:lift nm}] Lemma \ref{l:utile} and Theorem \ref{thm:lift nm} do not hold if $m$ does not divide $n$. Consider 
$f \colon \Z/6\Z \to \Z/8\Z$ defined by
$f(0)=0$, $ f(1)=3$, $f(2)=4$, $f(3)=1$, $f(4)=4$, $f(5)=7 $. 
Note first that, in  $\Z/8\Z$, 1,3 and 5 are invertible, hence $f$ is congruence preserving iff for  $k\in\{2,\ 4\}$, for all $x\in \Z/6\Z$,  $k$ divides $f(x+k)-f(x)$ and this holds;
 nevertheless, $f$  has no congruence preserving lift $F\colon \Z \to\Z$. If such a lift $F$ existed, we should have
\begin{enumerate}
\item because  $F$ lifts $f$,  $\pi_8(F(0))\!=\!f(\pi_6(0))\!=\!\!0$ and $\pi_8(F(8))\!=\!f(\pi_6(8))\!=\!f(2)\!=\!4$; 
\item as $F$ is congruence preserving,
8 must divide $F(8) -F(0)$; we already noted that  8  divides $F(0)$, hence 8 divides $F(8)$ and $\pi_8(F(8))=0$, contradicting  $\pi_8(F(8))=4$. 
\end{enumerate}
Note that  $\pi_{6,8}$ is neither a homomorphism nor surjective and $0=\pi_{8}(8)\not=\pi_{6,8}\circ \pi_{6}(8)=2$.
\end{example}
As a first corollary of Theorem~\ref{thm:lift nm} we 
get a new proof of the representations of congruence preserving functions $\Z/n\Z\to\Z/m\Z$
as finite linear sums of polynomials with rational coefficients
(cf. \cite{cgg15}).
Let us recall the so-called binomial polynomials.

\begin{definition}\label{def:cpNZ}
For $k\in\N$, let
$
P_k(x)=\dbinom{x}{k}=\dfrac{1}{k!}\prod_{\ell=0}^{\ell=k-1} (x-\ell)$.
\end{definition}
Though $P_k$  has rational coefficients, it maps $\N$ into $\Z$.
Also, observe that $P_k(x)$ takes value $0$ for all $k>x$.
This implies that
for any sequence of integers $(a_k)_{k\in\N}$, the infinite sum
$
\sum_{k\in\N}a_k\,P_k(x)
$
reduces to a finite sum for any $x\in\N$ 
hence defines a function $\N\to\Z$. 

\begin{definition}\label{def:lcm}
We denote by $\lcm(k)$ the least common multiple of integers
$1,\ldots,k$ (with the convention $\lcm(0)=1$).
\end{definition}
\begin{definition}\label{def:PksurZ/nZ}
To each binomial polynomial $P_k$, $k\in\N$, we associate a function 
$P_k^{n,m}\colon\Z/n\Z\to\Z/m\Z$ which sends an element $x\in\Z/n\Z$ to
$(\pi_m\circ P_k\circ\iota_n)(x)\in\Z/m\Z$. 
\end{definition}
In other words,
consider the representative $t$ of $x$ lying in $\{0,\ldots,n-1\}$,
evaluate $P_k(t)$ in $\N$ and then take the class of the results in 
$\Z/m\Z$.
\begin{lemma}\label{akPk-is-CP}
If $lcm(k)$ divides $a_k$ in $\Z$, then the  function $\pi_m(a_k)P_k^{n,m}\colon \Z/n\Z \to \Z/m\Z$ (represented by $a_k P_k$)
is congruence preserving.
\end{lemma}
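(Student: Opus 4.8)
The plan is to reduce the claim to a single divisibility in $\Z$ and then transport it through $\pi_m$. Fix $x,y\in\Z/n\Z$ and set $s=\iota_n(x)$, $t=\iota_n(y)$, so that $s,t\in\{0,\dots,n-1\}$. By Definition~\ref{def:PksurZ/nZ}, $P_k^{n,m}(x)=\pi_m(P_k(s))$, so since $\pi_m$ is a ring homomorphism,
\[
\pi_m(a_k)P_k^{n,m}(x)-\pi_m(a_k)P_k^{n,m}(y)=\pi_m\bigl(a_k(P_k(s)-P_k(t))\bigr).
\]
On the other hand, writing $x-y=\pi_n(s-t)$ and using that $m$ divides $n$, Lemma~\ref{l:utile} gives $\pi_{n,m}(x-y)=\pi_{n,m}(\pi_n(s-t))=\pi_m(s-t)$, exactly as in the computation inside the proof of Theorem~\ref{thm:lift nm}. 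Hence, to verify \eqref{eq:cp ZnZm} for this function it suffices to prove the integer divisibility
\[
(s-t)\ \text{divides}\ a_k\bigl(P_k(s)-P_k(t)\bigr);
\]
applying $\pi_m$ to a witnessing equality $a_k(P_k(s)-P_k(t))=(s-t)D$ then yields $\pi_m(a_k)P_k^{n,m}(x)-\pi_m(a_k)P_k^{n,m}(y)=\pi_{n,m}(x-y)\,\pi_m(D)$, which is precisely congruence preservation in the sense of Definition~\ref{def:cp ZnZm}.

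The heart of the argument is therefore this integer claim, and here the hypothesis $\lcm(k)\mid a_k$ is essential: I will show $(s-t)\mid \lcm(k)(P_k(s)-P_k(t))$, whence the claim follows by writing $a_k=\lcm(k)\,c$. The key step is the Vandermonde expansion $\binom{s}{k}=\sum_{j=0}^{k}\binom{s-t}{j}\binom{t}{k-j}$, which is a polynomial identity in $s,t$ and so holds for all integers; isolating the $j=0$ summand gives
\[
P_k(s)-P_k(t)=\sum_{j=1}^{k}\binom{s-t}{j}\binom{t}{k-j}.
\]
Now I invoke the absorption identity $j\binom{s-t}{j}=(s-t)\binom{s-t-1}{j-1}$, valid for all integers $s-t$, so that $\lcm(k)\binom{s-t}{j}=\tfrac{\lcm(k)}{j}(s-t)\binom{s-t-1}{j-1}$. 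For each $1\le j\le k$ the integer $j$ divides $\lcm(k)$, so $\lcm(k)/j\in\Z$, while $\binom{s-t-1}{j-1}\in\Z$; thus $(s-t)$ divides $\lcm(k)\binom{s-t}{j}$ term by term, and therefore divides the whole sum $\lcm(k)(P_k(s)-P_k(t))$, as required.

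I expect the main obstacle to be precisely this sharpening. The naive bound obtained from the integer polynomial $k!\,P_k(x)=x(x-1)\cdots(x-k+1)$ only shows $(s-t)\mid k!(P_k(s)-P_k(t))$, which is weaker than the required $\lcm(k)$-divisibility (strictly so once $k\ge4$, where $\lcm(k)<k!$); yet some nontrivial factor is genuinely needed, since $P_k$ alone is in general \emph{not} congruence preserving, as already $\binom{3}{2}-\binom{1}{2}=3$ is not divisible by $3-1=2$. Passing from $k!$ down to $\lcm(k)$ is exactly what the absorption identity achieves, by extracting a factor $s-t$ in each summand while only ever dividing by some $j\le k$. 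I also note that $m\mid n$ is used only to invoke Lemma~\ref{l:utile} for the equality $\pi_{n,m}(x-y)=\pi_m(s-t)$; the counterexample Example above shows that without it both this identity and the conclusion can fail. Finally, a more structural route would observe that $a_kP_k\colon\N\to\Z$ is congruence preserving by the same integer divisibility and lifts $\pi_m(a_k)P_k^{n,m}$, so that the implication $(3)\Rightarrow(1)$ of Theorem~\ref{thm:lift nm} applies; but the direct computation above is self-contained.
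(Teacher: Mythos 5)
Your proof is correct, and your transport step is the same as the paper's: fix $x,y$, pass to representatives $s=\iota_n(x)$, $t=\iota_n(y)$, establish a divisibility in $\Z$, and push it through the morphism $\pi_m$ using $\pi_m\circ\iota_n=\pi_{n,m}$. Where you genuinely differ is the core integer fact. The paper does not prove that $\lcm(k)\mid a_k$ implies $(s-t)\mid a_k\bigl(P_k(s)-P_k(t)\bigr)$; it simply cites \cite{cgg14} for the statement that $a_kP_k$ is then congruence preserving on $\N$, and devotes its proof entirely to the transport. You instead give a self-contained proof of that fact via the Chu--Vandermonde expansion $P_k(s)=\sum_{j=0}^{k}\binom{s-t}{j}\binom{t}{k-j}$ and the absorption identity $j\binom{s-t}{j}=(s-t)\binom{s-t-1}{j-1}$, which extracts the factor $s-t$ from each summand at the price of dividing $\lcm(k)$ by some $j\le k$. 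The paper's route is shorter and leverages prior work; yours makes the lemma independent of \cite{cgg14} and, as you point out, explains exactly why $\lcm(k)$ suffices where the naive integrality of $k!\,P_k$ would only give the weaker $k!$-statement. A further point in your favour: you make explicit that $m\mid n$ is needed (via Lemma~\ref{l:utile}) to identify $\pi_{n,m}(x)-\pi_{n,m}(y)$ with $\pi_{n,m}(x-y)$, so that the conclusion really matches Definition~\ref{def:cp ZnZm}; the paper's proof leaves this step implicit, even though the lemma as stated does not announce the hypothesis $m\mid n$.
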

\begin{proof}  In \cite{cgg14} we proved that if $lcm(k)$ divides $a_k$ then $a_k P_k$ is a congruence preserving function on $\N$.
Let us now show  that $\pi_m(a_k)P_k^{n,m}\colon\Z/n\Z\to\Z/m\Z$ is also congruence preserving.
Let $x,y\in\Z/n\Z$: as $a_k P_k$ is congruence preserving, $\iota_n(x)-\iota_n(y)$ divides $a_k P_k(\iota_n(x))-a_k P_k(\iota_n(y))$. As $\pi_m$ is a morphism, $\pi_m(\iota_n(x))-\pi_m(\iota_n(y))$ divides $\pi_m(a_k) \pi_m(P_k(\iota_n(x)))-\allowbreak\pi_m(a_k)\pi_m( P_k(\iota_n(y)))=\pi_m(a_k)P_k^{n,m}(x)-\pi_m(a_k)P_k^{n,m}(x)$; as $\pi_m\circ \iota_n=\pi_{n,m}$ (Notation \ref{def:pi n}), we conclude that $\pi_m(a_k)P_k^{n,m}$ is congruence preserving. 
\end{proof}

\begin{corollary}[\cite{cgg15}]\label{cor:mainZ/nZ}
Let $1\leq m=p_1^{\alpha_1}\cdots p_\ell^{\alpha_\ell}$, $p_i$ prime.
Suppose $m$ divides $n$ and
let  $\nu(m)= \max_{i=1,\ldots,\ell}\;{p_i^{\alpha_i} }$.
A function $f\colon \Z/n\Z \to \Z/m\Z$ is  congruence preserving  
if and only if it is represented by  a finite $\Z$-linear sum such that $lcm(k)$ divides $a_k$ (in $\Z$) for all $k<\nu(m)$,  i.e.
$f=\sum_{k=0}^{\nu(m)-1} \pi_m(a_k) P_k^{n,m}$. 
\end{corollary}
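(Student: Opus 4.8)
The plan is to derive both directions of the equivalence from Theorem~\ref{thm:lift nm} and Lemma~\ref{akPk-is-CP} together with the known representation of congruence preserving functions $\N\to\Z$ from \cite{cgg14}; the only genuinely new ingredient is a truncation argument that cuts the sum off at rank $\nu(m)$. Throughout I may assume $m\geq 2$, the case $m=1$ being trivial since then $\Z/m\Z$ is the zero ring.

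For the implication from congruence preservation to the representation, I would start from a congruence preserving $f\colon\Z/n\Z\to\Z/m\Z$. Since $m$ divides $n$, Theorem~\ref{thm:lift nm} supplies a congruence preserving lift $F\colon\N\to\Z$ satisfying $f=\pi_m\circ F\circ\iota_n$. By the representation theorem of \cite{cgg14}, every congruence preserving $F\colon\N\to\Z$ can be written as $F=\sum_{k\in\N}a_k P_k$ with $a_k\in\Z$ and $\lcm(k)$ dividing $a_k$ for every $k$. Pushing this through $\pi_m$ and precomposing with $\iota_n$ then gives, for every $x\in\Z/n\Z$,
\[
f(x)=\pi_m\big(F(\iota_n(x))\big)=\sum_{k\in\N}\pi_m(a_k)\,P_k^{n,m}(x),
\]
using that $\pi_m$ is a ring morphism together with the very definition of $P_k^{n,m}$ in Definition~\ref{def:PksurZ/nZ}.

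It then remains to truncate this sum at $\nu(m)$. Here I would invoke the elementary fact that $m$ divides $\lcm(k)=\lcm(1,\ldots,k)$ as soon as $k\geq\nu(m)$: writing $m=p_1^{\alpha_1}\cdots p_\ell^{\alpha_\ell}$, each prime power $p_i^{\alpha_i}$ is at most $\nu(m)\leq k$ and therefore divides $\lcm(1,\ldots,k)$, so, the $p_i^{\alpha_i}$ being pairwise coprime, their product $m$ divides it as well. Consequently, for every $k\geq\nu(m)$ the hypothesis $\lcm(k)\mid a_k$ forces $m\mid a_k$, that is $\pi_m(a_k)=0$, so all terms of rank $k\geq\nu(m)$ vanish identically. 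This leaves exactly $f=\sum_{k=0}^{\nu(m)-1}\pi_m(a_k)P_k^{n,m}$, a finite $\Z$-linear sum with $\lcm(k)\mid a_k$ for all $k<\nu(m)$, as required.

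The converse is immediate from the tools already assembled. Given $f=\sum_{k=0}^{\nu(m)-1}\pi_m(a_k)P_k^{n,m}$ with $\lcm(k)\mid a_k$, Lemma~\ref{akPk-is-CP} shows each summand $\pi_m(a_k)P_k^{n,m}$ is congruence preserving, and one checks directly that a finite sum of congruence preserving functions $\Z/n\Z\to\Z/m\Z$ is again congruence preserving: if $\pi_{n,m}(x-y)$ divides each $g_k(x)-g_k(y)$ in $\Z/m\Z$, it divides their sum. Hence $f$ is congruence preserving. The one delicate point I would watch is the truncation step: it is the vanishing of the \emph{coefficients} $\pi_m(a_k)$ for $k\geq\nu(m)$, rather than any vanishing of the $P_k^{n,m}$ themselves, that produces the bound, and the threshold $\nu(m)=\max_i p_i^{\alpha_i}$ (and not $m$) is sharp precisely because $\lcm(1,\ldots,k)$ accumulates each prime power only once it is reached, so $\lcm(k)$ first becomes a multiple of $m$ exactly at $k=\nu(m)$.
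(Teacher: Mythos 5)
Your proof is correct and follows essentially the same route as the paper's: lift $f$ via Theorem~\ref{thm:lift nm}, apply the Newton-series representation of congruence preserving functions on $\N$ with $\lcm(k)\mid a_k$, push through $\pi_m$, truncate at $\nu(m)$ because $\pi_m(a_k)=0$ for $k\geq\nu(m)$, and obtain the converse from Lemma~\ref{akPk-is-CP} plus closure of congruence preservation under finite sums. Your explicit justification that $m$ divides $\lcm(k)$ once $k\geq\nu(m)$ (and your handling of the trivial case $m=1$, which Theorem~\ref{thm:lift nm} formally excludes) merely fills in details the paper leaves implicit.
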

\begin{proof} 
Assume $f\colon\Z/n\Z\to\Z/m\Z$ is congruence preserving.
Applying Theorem~\ref{thm:lift nm}, lift $f$ to $F\colon\N\to\N$
which is congruence preserving.
$$
\xymatrix{
\N    \ar[rrrr]^{\text{\normalsize$F=
                                \sum_{k=0}^{\nu(m)-1} a_k\, P_k$}}
        \ar[d]_{\text{\normalsize$\pi_n$}}
&&&& \Z \ar[d]^{\text{\normalsize$\pi_m$} \qquad
\begin{array}{rcl}
f\circ\pi_n&=&\pi_m\circ F
\end{array}}
\\
\Z/n\Z    \ar[rrrr]^{\text{\normalsize$f$}}
&&&&\Z/m\Z }
$$
We proved in \cite{cgg15} that every congruence preserving function $F\colon\N\to\N$ is of the form
$F= \sum_{k=0}^\infty a_k P_k$ where $\lcm(k)$ divides $a_k$
for all $k$. 
Since $F$ lifts $f$, for $u\in\Z$, we have
\begin{multline}\label{eq:f poly}
f(\pi_n(u))\ =\ \pi_m(F(u))
\ =\ \pi_m(\sum_{k=0}^\infty a_k\, P_k(u))
\\
=\ \sum_{k=0}^\infty \pi_m(a_k)\, \pi_m(P_k(u))
\ =\
\sum_{k=0}^{k=\nu(m)-1} \pi_m(a_k)\, \pi_m(P_k(u))
\end{multline}
The last equality is obtained by noting that for $k\geq \nu(m)$, 
$m$ divides $\lcm(k)$ hence 
as $a_k$ is a multiple of $\lcm(k)$, $\pi_m(a_k)=0$.
From \eqref{eq:f poly} we get 
$f(\pi_n(u))
=\sum_{k=0}^{k=\nu(m)-1} \pi_m(a_k)\, \pi_m(P_k(u))
=\pi_m(\sum_{k=0}^{k=\nu(m)-1} a_k\, P_k(u))$.
This proves that $f$ is lifted to the rational polynomial function 
$\sum_{k=0}^{k=\nu(m)-1} a_k\, P_k$.

\smallskip The converse follows from  Lemma \ref{akPk-is-CP} and the fact that  any finite sum of congruence preserving functions is congruence preserving.
\end{proof}

\bigskip

As a second corollary of Theorem~\ref{thm:lift nm} we can lift  congruence preserving functions $\Z/n\Z\to\Z/n\Z$   to congruence preserving  functions $\Z/qn\Z\to\Z/qn\Z$.

We state a slightly more general result.
\begin{corollary}
Assume $m,n,q,r\geq1$,
$m$ divides both $n$ and $s$, and $n,s$ both divide $r$.
If $f\colon \Z/n\Z \to \Z/m\Z$ is congruence preserving
then it can be $(\pi_{r,n},\pi_{s,m})$-lifted to 
 $g\colon\Z/r\Z\to\Z/s\Z$ which is also congruence preserving.
 \end{corollary}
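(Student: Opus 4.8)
The plan is to reduce this corollary to Theorem~\ref{thm:lift nm} by lifting $f$ all the way up to $\N$ and then projecting back down to the diagram $\Z/r\Z\to\Z/s\Z$. First I would dispose of the degenerate case $m=1$: then $\Z/m\Z$ is trivial, every function is congruence preserving, and any choice of $g$ (say the constant $0$ map) is a lift. Assume henceforth $m\geq2$. Since $m$ divides $n$, Theorem~\ref{thm:lift nm} applies to $f\colon\Z/n\Z\to\Z/m\Z$ and produces a congruence preserving function $F\colon\N\to\N$ that $(\pi_n,\pi_m)$-lifts $f$, i.e.\ $\pi_m\circ F=f\circ\pi_n$. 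The candidate lift is then the composite
$$g\ =\ \pi_s\circ F\circ\iota_r\colon\ \Z/r\Z\longrightarrow\Z/s\Z,$$
which reads an element of $\Z/r\Z$ as its representative in $\{0,\ldots,r-1\}$, evaluates $F$ there, and projects the result into $\Z/s\Z$. This is well defined since $\iota_r$, $F$ and $\pi_s$ all are.

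Next I would verify the two required properties by diagram chasing. For the lifting condition $\pi_{s,m}\circ g=f\circ\pi_{r,n}$: because $m$ divides $s$, Lemma~\ref{l:utile} gives $\pi_{s,m}\circ\pi_s=\pi_m$, hence $\pi_{s,m}\circ g=\pi_m\circ F\circ\iota_r=f\circ\pi_n\circ\iota_r$; and since $\pi_n\circ\iota_r=\pi_{r,n}$ by Notation~\ref{def:pi n}, this equals $f\circ\pi_{r,n}$, as wanted. For congruence preservation of $g$ (Definition~\ref{def:cp ZnZm}) I would fix $x,y\in\Z/r\Z$. As $F$ is congruence preserving, $\iota_r(x)-\iota_r(y)$ divides $F(\iota_r(x))-F(\iota_r(y))$ in $\Z$, say $F(\iota_r(x))-F(\iota_r(y))=(\iota_r(x)-\iota_r(y))\,\delta$. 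Applying the morphism $\pi_s$ and using $\pi_s\circ\iota_r=\pi_{r,s}$ yields $g(x)-g(y)=\pi_{r,s}(\iota_r(x)-\iota_r(y))\,\pi_s(\delta)$. Here the hypothesis $s\mid r$ makes $\pi_{r,s}$ a homomorphism, so $\pi_{r,s}(\iota_r(x)-\iota_r(y))=\pi_{r,s}(x-y)$, whence $\pi_{r,s}(x-y)$ divides $g(x)-g(y)$ in $\Z/s\Z$, which is exactly congruence preservation.

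No single step presents a serious obstacle; the proof is essentially bookkeeping with the maps $\pi_\bullet$ and $\iota_\bullet$, and I would be careful to invoke each of the four divisibility hypotheses at the point it is needed: $m\mid n$ to invoke Theorem~\ref{thm:lift nm}; $m\mid s$ for Lemma~\ref{l:utile} (and to make $\pi_{s,m}$ a surjective homomorphism, so that Definition~\ref{def:lift} applies); $n\mid r$ to make $\pi_{r,n}$ a surjective homomorphism, again so that $g$ genuinely counts as a lifting; and $s\mid r$ for the concluding divisibility step. The one place deserving real care is precisely this last step: it is where the analogue of the $\Z/6\Z\to\Z/8\Z$ counterexample following Theorem~\ref{thm:lift nm} would break down, since without $s\mid r$ the quantity $\pi_{r,s}(x-y)$ need not split as $\pi_{r,s}(x)-\pi_{r,s}(y)$ and the divisibility conclusion fails.
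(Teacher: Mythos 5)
Your proof is correct and follows essentially the same route as the paper's: lift $f$ to a congruence preserving $F\colon\N\to\N$ via Theorem~\ref{thm:lift nm}, set $g=\pi_s\circ F\circ\iota_r$, verify the lifting identity with Lemma~\ref{l:utile} (using $m\mid s$) and $\pi_n\circ\iota_r=\pi_{r,n}$, and deduce congruence preservation of $g$ from that of $F$ using $s\mid r$ (modulo the harmless notational slip $\pi_{r,s}(\iota_r(x)-\iota_r(y))$, which should read $\pi_s(\iota_r(x)-\iota_r(y))$). Your explicit handling of the degenerate case $m=1$ and the bookkeeping of exactly where each divisibility hypothesis enters are small additions the paper omits, but the argument is the same.
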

\begin{proof}
Using Theorem~\ref{thm:lift nm},
lift $f$ to a congruence preserving $F:\N\to\N$ and set 
$g=\pi_{s}\circ F\circ\iota_{r}$. 
We show that the following diagram commutes:
\[\xymatrix{
\N    \ar[rrrrrr]^{\text{\normalsize$F$}}
	\ar@<-2pt>[rrd]_{\text{\normalsize$\pi_r$}}
        \ar[rrdd]_{\text{\normalsize$\pi_n$}}
&&&&&& \N    \ar[lld]_{\text{\normalsize$\pi_s$}}
               \ar[lldd]^{\text{\normalsize$\pi_m$}}
\\
&&\Z/r\Z    \ar[rr]_{\text{\normalsize$g$}}
                  \ar[llu]_{\text{\normalsize$\iota_r$}}
                \ar[d]^{\text{\normalsize$\pi_{r,n}$}}
&&\Z/s\Z  \ar[d]_{\text{\normalsize$\pi_{s,m}$}}
\\
&&\Z/n\Z \ar[rr]_{\text{\normalsize$f$}}
&&\Z/m\Z 
}\]
$$
\begin{array}{rcll}
\pi_{s,m}\circ  g
&=& \pi_{s,m}\circ (\pi_{s}\circ F\circ\iota_{r})
\\
&=&(\pi_m\circ F)\circ\iota_{r}
&\text{by Lemma~\ref{l:utile} since $\pi_m=\pi_{s,m}\circ\pi_{s}$}
\\ 
&=& (f\circ\pi_n)\circ\iota_{r} &\text{ since \ $F$ lifts $f$}
\\
&=& f\circ\pi_{r,n} &\text{ since \ $\pi_n\circ\iota_{r}=\pi_{r,n}$}
\end{array}
$$

Thus, $\pi_{s,m}\circ g = f\circ \pi_{r,n}$, i.e. $g$ lifts $f$.

Finally, if $x,y\in\Z/r\Z$ then 
$\iota_r(x)-\iota_r(y)$ divides $F(\iota_r(x))-F(\iota_r(y))$
(by congruence preservation of $F$).
Since $\pi_s$ is a morphism and $\pi_s=\pi_{r,s}\circ\pi_r$, we deduce that 
$\pi_s(\iota_r(x))-\pi_s(\iota_r(y))=
(\pi_{r,s}\circ\pi_r\circ\iota_r)(x)-(\pi_{r,s}\circ\pi_r\circ\iota_r)(y)
=\pi_{r,s}(x-y)$ (recall $\pi_r\circ\iota_r$ is the identity on $\Z/r\Z$) divides
$\pi_s(F(\iota_r(x)))-\pi_s(F(\iota_r(y))=g(x)-g(y)$
(by definition of $g$).
Thus, $g$ is congruence preserving.
\end{proof}

\begin{remark}
The previous diagram 
is completely commutative:  $F$ lifts both $f$ and $g$,  and $g$ lifts $f$:
as $r$ divides $x-\iota_{r}\circ\pi_r(x)$ for all $x$, and $F$ is congruence preserving, $r$ divides $F(x)-F\circ\iota_{r}\circ\pi_r(x)$, and because $s$ divides $r$, $\pi_s\circ F(x)=\pi_{s}\circ F\circ\iota_{r}\circ\pi_r(x)$ hence
$\pi_s\circ F=g\circ\pi_r=\pi_{s}\circ F\circ\iota_{r}\circ\pi_r$. 
\end{remark}

\section{Congruence preservation on $p$-adic/profinite integers}
\label{s:profinite}
%
All along this section, $p$ is a prime number;
we study congruence preserving functions on the
rings $\Z_p$ of $p$-adic integers
and  $\Zhat$ of profinite integers. 
$\Z_p$ is the projective limit 
${\underleftarrow{\lim}} \,\Z/p^n\Z$
relative to the projections $\pi_{p^n,p^m}$. 
Usually, $\Zhat$ is defined as the projective limit
${\underleftarrow{\lim}}\,\Z/n\Z$
of the finite rings $\Z/n\Z$ relative to the projections $\pi_{n,m}$,
for $m$ dividing $n$.
We here use the following equivalent definition which allows to
get completely similar proofs for $\Z_p$ and $\Zhat$.
$$
\Zhat\ =\ {\underleftarrow{\lim}}\  \Z/n!\Z
\ =\ \{\hat x=(x_n)_{n=1}^\infty\in\textstyle\prod_{n=1}^\infty\Z/n!\Z
\mid \forall m<n,\ x_m\equiv x_n\!\!\!\!\pmod {m!}\}
$$

Recall that $\Z_p$ (resp. $\Zhat$) contains the ring $\Z$
and is a compact topological ring
for the topology given by the ultrametric $d$ such that
$d(x,y)=2^{-n}$ where $n$ is largest such that $p^n$ (resp. $n!$) 
divides $x-y$, i.e. $x$ and $y$ have the same first $n$ digits
in their base $p$ (resp. base factorial) representation.
We refer to the Appendix for  some basic definitions,
representations and facts that we use about the compact topological rings
$\Z_p$ and $\Zhat$. 

\smallskip

\noindent We first prove that on $\Z_p$ and $\Zhat$  every congruence preserving function is continuous (Proposition \ref{p:unif cont}).
%
\begin{definition}
1. Let  $\mu:\N\to\N$ be increasing. 
A function $\Psi:\Z_p\to\Z_p$ admits $\mu$
as modulus of uniform continuity
if and only if $d(x,y)\leq2^{-\mu(n)}$ implies $d(\Psi(x),\Psi(y))\leq2^{-n}$.
\\
2. $\Phi$ is $1$-Lipschitz if it admits the identity as modulus of uniform continuity.
\end{definition}
Since the rings $\Z_p$ and $\Zhat$ are compact, every continuous 
function admits a modulus of uniform continuity.
\begin{proposition}\label{p:unif cont}
Every congruence preserving function $\Psi:\Z_p\to\Z_p$
is $1$-Lipschitz. 
Idem with $\Zhat$ in place of $\Z_p$.
\end{proposition}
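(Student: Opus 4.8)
The plan is to unwind the definition of the ultrametric $d$ and translate the $1$-Lipschitz condition into a divisibility statement that follows directly from congruence preservation. Recall that on $\Z_p$ we have $d(x,y)=2^{-n}$ precisely when $p^n$ is the largest power of $p$ dividing $x-y$, so $d(x,y)\leq 2^{-k}$ is equivalent to $p^k$ dividing $x-y$. Being $1$-Lipschitz means admitting the identity as modulus of uniform continuity, i.e. $d(x,y)\leq 2^{-k}\Rightarrow d(\Psi(x),\Psi(y))\leq 2^{-k}$ for every $k\in\N$. So the entire statement reduces to showing: for all $x,y\in\Z_p$ and all $k$, if $p^k$ divides $x-y$ then $p^k$ divides $\Psi(x)-\Psi(y)$.

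First I would fix $x,y$ and assume $p^k$ divides $x-y$. Since $\Psi$ is congruence preserving (Definition \ref{def:idr} applied with $R=X=\Z_p$), there is some $d\in\Z_p$ with $\Psi(x)-\Psi(y)=d\,(x-y)$. Then $\Psi(x)-\Psi(y)$ is a multiple of $x-y$, and since $p^k$ divides $x-y$, transitivity of divisibility in the ring $\Z_p$ gives that $p^k$ divides $\Psi(x)-\Psi(y)$. This is exactly the conclusion $d(\Psi(x),\Psi(y))\leq 2^{-k}$, so $\Psi$ admits the identity as modulus of uniform continuity and is $1$-Lipschitz. The argument for $\Zhat$ is completely parallel: there the metric is governed by divisibility by $n!$ rather than $p^k$, and $d(x,y)\leq 2^{-n}$ is equivalent to $n!$ dividing $x-y$; the same transitivity-of-divisibility step, using that $\Psi$ is congruence preserving on the ring $\Zhat$, yields that $n!$ divides $\Psi(x)-\Psi(y)$ whenever it divides $x-y$.

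The only point requiring care, and the step I would treat as the main (mild) obstacle, is justifying that the divisor witnessing congruence preservation genuinely lives in the ring $\Z_p$ (resp. $\Zhat$) rather than merely in $\Z$, so that divisibility really is transitive in the ambient ring. Here the formulation is clean because Definition \ref{def:idr} is stated for an arbitrary commutative (semi-)ring $R$ with $X\subseteq R$, and we take $R=X=\Z_p$; thus the element $d$ realizing $\Psi(x)-\Psi(y)=d(x-y)$ is by hypothesis a $p$-adic integer, and divisibility by $p^k$ in $\Z_p$ is a genuine ring-theoretic relation. No continuity or limit argument is needed: the conclusion is purely algebraic, and the topological statement ($1$-Lipschitz) is obtained merely by reading off the definition of the ultrametric. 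Consequently the proof is short, and the phrase ``Idem with $\Zhat$'' is fully justified by substituting $n!$ for $p^k$ throughout.
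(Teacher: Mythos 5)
Your proof is correct and follows exactly the paper's own argument: unwind the ultrametric so that $d(x,y)\leq 2^{-k}$ means $p^k$ (resp.\ $k!$) divides $x-y$, then apply congruence preservation and transitivity of divisibility in the ambient ring to conclude $p^k$ divides $\Psi(x)-\Psi(y)$. The extra remark that the witness $d$ lies in $\Z_p$ (resp.\ $\Zhat$) is a point the paper leaves implicit, but it is the same proof.
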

\begin{proof}
If $d(x,y)\leq 2^{-n}$ then $p^n$ divides $x-y$ hence
(by congruence preservation) $p^n$ also divides $\Psi(x)-\Psi(y)$
which yields $d(\Psi(x),\Psi(y))\leq 2^{-n}$.
\end{proof}
The converse of Proposition \ref{p:unif cont} is false: a continuous function is not necessarily congruence preserving as will be seen in Example \ref{CnonCP}. Note the following
%
\begin{corollary}\label{cardinalite}
There are functions $\Z_p\to\Z_p$ (resp. $\Zhat\to\Zhat$)
which are not continuous hence not congruence preserving.
\end{corollary}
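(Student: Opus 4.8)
The plan is to argue by cardinality: I will show that there are strictly more functions $\Z_p\to\Z_p$ than there are \emph{continuous} ones, so that non-continuous functions necessarily exist. The second assertion (``hence not congruence preserving'') is then immediate from Proposition~\ref{p:unif cont}, read in contrapositive form: a congruence preserving function is $1$-Lipschitz, hence continuous, so any non-continuous function fails to be congruence preserving. The same counting works verbatim for $\Zhat$.

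First I would record that $|\Z_p|=\mathfrak c$, the cardinality of the continuum. Via the base-$p$ representation recalled in the Appendix, each $p$-adic integer corresponds bijectively to an infinite sequence of digits in $\{0,\ldots,p-1\}$, so $|\Z_p|=p^{\aleph_0}=\mathfrak c$; likewise the base-factorial representation gives $|\Zhat|=\mathfrak c$. Consequently the set of \emph{all} functions $\Z_p\to\Z_p$ has cardinality $\mathfrak c^{\mathfrak c}=2^{\mathfrak c}$, which by Cantor's theorem is strictly larger than $\mathfrak c$.

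Next I would bound the number of continuous functions. The ring $\Z_p$ is a compact metric space, hence separable; concretely $\Z$ (equivalently $\N$) is dense in $\Z_p$, since every $p$-adic integer is the limit of its integer truncations. A continuous function $\Z_p\to\Z_p$ is determined by its restriction to such a countable dense subset $D$, because $\Z_p$ is Hausdorff and two continuous maps agreeing on a dense set agree everywhere; thus the restriction map into $\Z_p^{D}$ is injective and the number of continuous functions is at most $|\Z_p|^{|D|}=\mathfrak c^{\aleph_0}=\mathfrak c$.

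Comparing the two counts, the continuous functions form a subset of cardinality at most $\mathfrak c$ inside a set of $2^{\mathfrak c}>\mathfrak c$ functions, so non-continuous functions $\Z_p\to\Z_p$ exist (in fact $2^{\mathfrak c}$ of them), and by Proposition~\ref{p:unif cont} none of these can be congruence preserving. The argument for $\Zhat$ is identical, using density of $\Z$ in $\Zhat$ and $|\Zhat|=\mathfrak c$. The only point requiring a little care — and it is entirely routine — is the density of $\Z$ in each ring together with the resulting fact that a continuous map is pinned down by its values on a countable dense set; beyond that the proof is a direct cardinal comparison, so I do not anticipate any genuine obstacle.
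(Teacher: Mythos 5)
Your proof is correct and takes essentially the same route as the paper's: a cardinality comparison between all functions ($2^{2^{\aleph_0}}$, since $|\Z_p|=2^{\aleph_0}$) and continuous ones (at most $2^{\aleph_0}$, because a continuous map into a Hausdorff space is determined by its restriction to a countable dense subset, the paper using $\N$ where you use $\Z$), followed by the contrapositive of Proposition~\ref{p:unif cont}. Your write-up merely spells out the separability step that the paper leaves implicit.
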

\begin{proof} As $\Z_p$ has cardinality $2^{\aleph_0}$
there are $2^{2^{\aleph_0}}$
functions $\Z_p\to\Z_p$. 
Since $\N$ is dense in $\Z_p$, 
$\Z_p$ is a separable  space, hence
there are at most $2^{\aleph_0}$ continuous functions.
\end{proof}
%

%
%
%
In general an arbitrary continuous function on $\Z_p$
is not the inverse limit of a sequence of functions
$\Z/p^n\Z\to\Z/p^n\Z$'s. 
However, this is true for congruence preserving functions.
We first recall how any continuous function $\Psi\colon \Z_p\to \Z_p$
is the inverse limit of a sequence of an inverse system of
continuous functions $\psi_n\colon \Z/p^{\mu(n)}\Z\to\Z/p^n\Z$, $n\in\N$,
i.e. the diagrams of Figure~\ref{t:inverseLim} commute for any $m\leq n$.
For legibility, we use notations adapted to $\Z_p$:  we write
$\pi_n^p$ for $\pi_{p^n}\colon \Z_p\to\Z/p^n\Z$, $\pi_{n,m}^p$ (resp.  $\iota_{n,m}^p$)
\ for \ 
$\pi_{p^n,p^m}$ (resp. $\iota_{p^n,p^m}$), and $\iota_n^p$ for $\iota_{p^n}\colon \Z/p^n\Z\to \Z_p$.

\begin{proposition}\label{p:cont are inverse}
Consider $\Psi:\Z_p\to\Z_p$ and a strictly increasing $\mu:\N\to\N$.
Define \ $\psi_n:\Z/p^{\mu(n)}\Z\to\Z/p^n\Z$ \
as \ $\psi_n=\pi_n^p\circ\Psi\circ\iota_{\mu(n)}^p$
for all $n\in\N$.
\\
Then the following conditions are equivalent~:
\begin{enumerate}
\item
$\Psi$ is uniformly continuous 
and admits $\mu$ as a modulus of uniform continuity.
\item
For all $1\leq m\leq n$, the diagrams of Figure~\ref{t:inverseLim} commute
hence $\Psi$ is the inverse limit of the $\psi_n$'s, $n\in\N$.
\end{enumerate}
Idem with $\Zhat$ in place of $\Z_p$.
\end{proposition}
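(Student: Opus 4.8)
The plan is to reduce the whole statement to elementary diagram chasing, after translating the metric hypothesis of (1) into equalities of canonical projections. Two dictionary facts do all the work: first, $d(x,y)\leq 2^{-k}$ holds if and only if $p^k$ divides $x-y$, i.e. if and only if $\pi_k^p(x)=\pi_k^p(y)$; second, $\pi_{\mu(n)}^p\circ\iota_{\mu(n)}^p$ is the identity on $\Z/p^{\mu(n)}\Z$, while $\iota_{\mu(n)}^p\circ\pi_{\mu(n)}^p$ moves a point only within its class modulo $p^{\mu(n)}$, so that $\iota_{\mu(n)}^p(\pi_{\mu(n)}^p(x))\equiv x\pmod{p^{\mu(n)}}$ for every $x\in\Z_p$. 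With this dictionary, ``$\Psi$ admits $\mu$ as modulus'' reads: $\pi_{\mu(n)}^p(x)=\pi_{\mu(n)}^p(y)$ implies $\pi_n^p(\Psi(x))=\pi_n^p(\Psi(y))$.

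For $(1)\Rightarrow(2)$ I would first establish the ``limit'' square $\pi_n^p\circ\Psi=\psi_n\circ\pi_{\mu(n)}^p$. Indeed, for $x\in\Z_p$ the point $\iota_{\mu(n)}^p(\pi_{\mu(n)}^p(x))$ agrees with $x$ modulo $p^{\mu(n)}$, so uniform continuity with modulus $\mu$ gives $\pi_n^p(\Psi(x))=\pi_n^p(\Psi(\iota_{\mu(n)}^p(\pi_{\mu(n)}^p(x))))=\psi_n(\pi_{\mu(n)}^p(x))$ by the very definition of $\psi_n$. The compatibility square $\pi_{n,m}^p\circ\psi_n=\psi_m\circ\pi_{\mu(n),\mu(m)}^p$ (for $m\leq n$) then follows formally: composing the limit square with $\pi_{n,m}^p$ and using the reduction identity $\pi_m^p=\pi_{n,m}^p\circ\pi_n^p$ (the $p$-adic analogue of Lemma~\ref{l:utile}) together with $\pi_{\mu(m)}^p=\pi_{\mu(n),\mu(m)}^p\circ\pi_{\mu(n)}^p$ yields $\pi_{n,m}^p\circ\psi_n\circ\pi_{\mu(n)}^p=\psi_m\circ\pi_{\mu(n),\mu(m)}^p\circ\pi_{\mu(n)}^p$, and surjectivity of $\pi_{\mu(n)}^p$ lets me cancel it on the right. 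Since $\mu$ is strictly increasing, $(\mu(n))_n$ is cofinal in $\N$, so ${\underleftarrow{\lim}}\,\Z/p^{\mu(n)}\Z=\Z_p$; the compatibility squares say the $\psi_n$ form an inverse system, and the limit squares say $\Psi$ is its inverse limit, which is (2).

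For $(2)\Rightarrow(1)$ only the limit square is needed. If $\pi_{\mu(n)}^p(x)=\pi_{\mu(n)}^p(y)$ then $\pi_n^p(\Psi(x))=\psi_n(\pi_{\mu(n)}^p(x))=\psi_n(\pi_{\mu(n)}^p(y))=\pi_n^p(\Psi(y))$, hence $p^n$ divides $\Psi(x)-\Psi(y)$ and $d(\Psi(x),\Psi(y))\leq 2^{-n}$; this is exactly the statement that $\mu$ is a modulus of uniform continuity.

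The only step carrying real content is the limit square in $(1)\Rightarrow(2)$: everything else is purely formal chasing of surjective morphisms. The subtlety is that $\psi_n$ is built from $\Psi$ only through the section $\iota_{\mu(n)}^p$, i.e. from the values of $\Psi$ on the representatives in $\{0,\ldots,p^{\mu(n)}-1\}$; uniform continuity with modulus $\mu$ is precisely what guarantees that these values determine $\pi_n^p\circ\Psi$ on all of $\Z_p$, and this is where the hypothesis is genuinely used rather than the mere definition of $\psi_n$. Finally, the $\Zhat$ case is obtained verbatim: replace $p^k$ by $k!$ throughout, use the factorial presentation ${\underleftarrow{\lim}}\,\Z/n!\Z$, and note that $m!$ divides $n!$ for $m\leq n$, so the analogue of Lemma~\ref{l:utile} and the cofinality of $(\mu(n))_n$ remain valid.
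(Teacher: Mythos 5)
Your proof is correct and follows essentially the same route as the paper: the identical key step (using $\iota^p_{\mu(n)}(\pi^p_{\mu(n)}(x))\equiv x\pmod{p^{\mu(n)}}$ together with the modulus hypothesis) yields the square $\pi_n^p\circ\Psi=\psi_n\circ\pi_{\mu(n)}^p$, and the compatibility squares and the converse direction are the same formal manipulations as in the paper. The only divergence is cosmetic and in your favor: you prove $(2)\Rightarrow(1)$ directly from the upper (lifting) square, whereas the paper routes through an intermediate condition stated only in terms of the lower halves of the diagrams, even though its argument for that implication in fact appeals to the upper square exactly as you do.
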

\begin{figure}$$
\begin{array}{c}
\xymatrix{
\Z/p^{\mu(n)}\Z 
\ar[r]^{\text{\normalsize$\iota^p_{\mu(n)}$}}
\ar[dr]^{\textit{\normalsize Id}}
&\Z_p  \ar[rr]^{\text{\normalsize$\Psi$}}
           \ar[d]^{\text{\normalsize$\pi^p_{\mu(n)}$}}
&&\Z_p  \ar[d]^{\text{\normalsize$\pi^p_n$}}
\\
&\Z/p^{\mu(n)}\Z  \ar[rr]^{\text{\normalsize$\psi_n$}}
                    \ar[d]^{\text{\normalsize$\pi^p_{\mu(n),\mu(m)}$}}
&&\Z/p^n\Z  \ar[d]^{\text{\normalsize$\pi^p_{n,m}$}}
\\
\Z/p^{\mu(m)}\Z  \ar[r]_{\textit{\normalsize Id}}
                 \ar[ur]^{\text{\normalsize$\iota^p_{\mu(m),\mu(n)}$}}
&\Z/p^{\mu(m)}\Z \ar[rr]_{\text{\normalsize$\psi_m$}}
&&\Z/p^m\Z }
\end{array}
$$
\caption{$\Psi$ as the inverse limit of the $\psi_n$'s, $n\in\N$.}\label{t:inverseLim}
\end{figure}
\begin{proof}
(1) and (2) are also equivalent to (3) below.

(3)
For all $1\leq m\leq n$, the lower half of the diagram
of Figure ~\ref{t:inverseLim} commutes.

\smallskip\noindent
$(1)\Rightarrow(2)$.
\textbullet\
We first show $\pi_n^p\circ\Psi=\psi_n\circ\pi_{\mu(n)}^p$.
Let $u\in\Z_p$.
Since $\pi_{\mu(n)}^p\circ\iota^p_{\mu(n)}$ is the identity 
on $\Z/p^{\mu(n)}\Z$,
we have $\pi_{\mu(n)}^p(u)
=\pi_{\mu(n)}^p(\iota^p_{\mu(n)}(\pi_{\mu(n)}^p(u)))$
hence $p^{\mu(n)}$ (considered as an element of $\Z_p$)
divides the difference 
$u-\iota^p_{\mu(n)}(\pi_{\mu(n)}^p(u))$,
i.e. the distance between these two elements is at most $2^{-\mu(n)}$.
As $\mu$ is a modulus of uniform continuity for $\Psi$,
the distance between their images under $\Psi$ is at most $2^{-n}$,
i.e. $p^n$ divides their difference, hence
$\pi_n^p(\Psi(u))
=\pi_n^p(\Psi(\iota^p_{\mu(n)}(\pi_{\mu(n)}^p(u))))$.
By definition, $\psi_n\!=\pi_n^p\circ\Psi\circ\iota^p_{\mu(n)}$.
Thus, $\pi_n^p(\Psi(u))\!
=\!\psi_n(\pi_{\mu(n)}^p(u))$, i.e. $\Psi$ lifts $\psi_n$.
\\
\textbullet\
We now show 
$\pi^p_{n,m}\circ\psi_n=\psi_m\circ\pi^p_{\mu(n),\mu(m)}$.
Since $\Psi$ lifts $\psi_m$, we have
\begin{eqnarray*}
\pi_m^p\circ\Psi &=&\psi_m\circ\pi_{\mu(m)}^p
\\
\text{hence\qquad}
\pi_m^p\circ\Psi\circ\iota^p_{\mu(n)}
&=& \psi_m\circ\pi_{\mu(m)}^p\circ\iota^p_{\mu(n)}
\\
\pi^p_{n,m}\circ\pi_n^p\circ\Psi\circ\iota^p_{\mu(n)}
&=&\psi_m\circ\pi^p_{\mu(n),\mu(m)}
\circ\pi^p_{\mu(n)}\circ\iota^p_{\mu(n)}
\\
\pi^p_{n,m}\circ\psi_n&=&\psi_m\circ\pi^p_{\mu(n),\mu(m)}
\quad\text{since $\pi^p_{\mu(n)}\circ\iota^p_{\mu(n)}$
is the identity.}
\end{eqnarray*}
This last equality means that $\psi_n$ lifts $\psi_m$.
\\
$(2)\Rightarrow(3)$. Trivial
\\
$(3)\Rightarrow(1)$.
The fact that $\Psi$ lifts $\psi_n$ shows that two elements of $\Z_p$
with the same first $\mu(n)$ digits (in the $p$-adic representation)
have images with the same first $n$ digits. This proves that $\mu$ is a
modulus of uniform continuity for $\Psi$.
\hfill\qed\end{proof}
For congruence preserving functions $\Phi:\Z_p\to\Z_p$,
the representation of Proposition~\ref{p:cont are inverse}
as an inverse limit gets smoother since then $\mu(n)=n$.
\begin{theorem}\label{thm:cp are inverse}
For a function $\Phi:\Z_p\to\Z_p$,
letting $\varphi_n:\Z/p^n\Z\to\Z/p^n\Z$ be  defined as
$\varphi_n=\pi_n^p\circ\Phi\circ\iota_n^p$,
the following conditions are equivalent.
\begin{itemize}
\item[(1)]
$\Phi$ is congruence preserving.
\item[(2)]
$\Phi$ is $1$-Lipschitz,
 all $\varphi_n$'s are congruence preserving
and $\Phi$ is the inverse limit of the $\varphi_n$'s, $n\in\N$.
\end{itemize}
A similar equivalence also holds for functions $\Phi:\Zhat\to\Zhat$.
\end{theorem}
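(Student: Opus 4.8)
The plan is to prove the two implications $(1)\Rightarrow(2)$ and $(2)\Rightarrow(1)$ for $\Z_p$, and then to observe that the argument transfers to $\Zhat$ once divisibility there is read off prime by prime. Throughout I would rely on Proposition~\ref{p:unif cont} (congruence preservation forces $1$-Lipschitz) and on Proposition~\ref{p:cont are inverse} applied with the particular modulus $\mu(n)=n$, for which the functions $\psi_n$ of that proposition coincide with the $\varphi_n=\pi_n^p\circ\Phi\circ\iota_n^p$ of the present statement.

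For $(1)\Rightarrow(2)$, suppose $\Phi$ is congruence preserving. First, $\Phi$ is $1$-Lipschitz by Proposition~\ref{p:unif cont}, i.e. it admits $\mu(n)=n$ as modulus of uniform continuity; Proposition~\ref{p:cont are inverse} then yields at once that $\Phi$ is the inverse limit of the $\varphi_n$'s. It remains to check that each $\varphi_n$ is congruence preserving, which I would do by pushing the divisibility through the homomorphism $\pi_n^p$: given $x,y\in\Z/p^n\Z$, set $a=\iota_n^p(x)$ and $b=\iota_n^p(y)$; congruence preservation of $\Phi$ gives $\Phi(a)-\Phi(b)=(a-b)d$ for some $d\in\Z_p$, and applying the ring homomorphism $\pi_n^p$ (using $\pi_n^p\circ\iota_n^p=\mathrm{Id}$ and the definition of $\varphi_n$) turns this into $\varphi_n(x)-\varphi_n(y)=(x-y)\,\pi_n^p(d)$, which is exactly congruence preservation of $\varphi_n$.

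The implication $(2)\Rightarrow(1)$ is the substantive one. From the inverse-limit hypothesis I get the lifting relations $\pi_n^p\circ\Phi=\varphi_n\circ\pi_n^p$ for all $n$. Fix $x,y\in\Z_p$; the case $x=y$ is trivial, so let $k=v_p(x-y)<\infty$ be the $p$-adic valuation and aim to show that $p^k$ divides $\Phi(x)-\Phi(y)$, which is equivalent to $x-y$ dividing $\Phi(x)-\Phi(y)$ in $\Z_p$. Working at a level $n>k$, write $\bar x=\pi_n^p(x)$ and $\bar y=\pi_n^p(y)$; since $\varphi_n$ is congruence preserving, $\bar x-\bar y$ divides $\varphi_n(\bar x)-\varphi_n(\bar y)$ in $\Z/p^n\Z$, and the lifting relations identify the latter with $\pi_n^p(\Phi(x)-\Phi(y))$. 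Because $\bar x-\bar y=\pi_n^p(x-y)$ has valuation exactly $k$ in $\Z/p^n\Z$ (here $n>k$ is used), this divisibility forces $\pi_n^p(\Phi(x)-\Phi(y))$ to be a multiple of $p^k$, i.e. $p^k$ divides $\Phi(x)-\Phi(y)$ in $\Z_p$. The crux, and the step I expect to be the main obstacle to state cleanly, is precisely this passage from divisibility in the finite quotients back to divisibility in $\Z_p$: it works because divisibility in $\Z_p$ is governed entirely by the $p$-adic valuation, so a single finite level detecting valuation $k$ already certifies $p^k\mid\Phi(x)-\Phi(y)$.

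For $\Zhat$, viewed as the inverse limit of the $\Z/n!\Z$, the same two implications go through with $p^n$ replaced by $n!$, the only change being in the last step: divisibility in $\Zhat$ must be tested prime by prime, since $a$ divides $b$ in $\Zhat$ iff $v_p(a)\le v_p(b)$ for every prime $p$. Fixing a prime $p$ and $k=v_p(x-y)$, I would choose $n$ large enough that $v_p(n!)>k$, so that the $p$-part of $\Z/n!\Z$ still detects valuation $k$; the congruence preservation of $\varphi_n$ together with the lifting relations then gives $v_p(\Phi(x)-\Phi(y))\ge k$ exactly as before, and ranging over all primes $p$ yields $(x-y)\mid(\Phi(x)-\Phi(y))$ in $\Zhat$.
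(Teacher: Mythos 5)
Your $(1)\Rightarrow(2)$ is essentially the paper's own argument: Proposition~\ref{p:unif cont} for the $1$-Lipschitz property, Proposition~\ref{p:cont are inverse} instantiated with $\mu=\mathrm{Id}$ for the inverse-limit property, and pushing divisibility through the morphism $\pi_n^p$ (using $\pi_n^p\circ\iota_n^p=\mathrm{Id}$) for congruence preservation of the $\varphi_n$'s. The genuine divergence is in $(2)\Rightarrow(1)$. The paper never mentions valuations: for fixed $x,y$ it considers, for each $n$, the nonempty finite set $U_n^{x,y}$ of witnesses $u\in\Z/p^n\Z$ with $\varphi_n(\pi_n^p(x))-\varphi_n(\pi_n^p(y))=(\pi_n^p(x)-\pi_n^p(y))\,u$, checks that $\pi^p_{n,m}$ maps $U_n^{x,y}$ into $U_m^{x,y}$, and uses K\"onig's Lemma on the resulting finitely branching tree to extract a coherent branch, i.e.\ an actual quotient $z\in\Z_p$ with $\Phi(x)-\Phi(y)=(x-y)z$. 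You instead invoke the structure of $\Z_p$ as a valuation ring: divisibility is equivalent to the inequality of valuations, so a single level $n>k=v_p(x-y)$ certifies $p^k\mid\Phi(x)-\Phi(y)$ (since $\pi_n^p(\Phi(x)-\Phi(y))\in p^k(\Z/p^n\Z)$ with $k<n$ gives $\Phi(x)-\Phi(y)\in p^k\Z_p+p^n\Z_p=p^k\Z_p$). For $\Z_p$ this is correct and strictly simpler --- no tree, no compactness --- at the price of using ring-specific structure; the paper's witness-tree argument is ring-agnostic and transfers verbatim to $\Zhat$.

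That uniformity is precisely where your $\Zhat$ reduction has a gap. $\Zhat\cong\prod_{p}\Z_p$ has zero divisors, so for $x\neq y$ it can happen that $v_p(x-y)=\infty$ for some prime $p$ (the $p$-component of $x-y$ vanishes). For such a prime your instruction ``choose $n$ with $v_p(n!)>k$'' is vacuous, and no single finite level can certify what is actually needed, namely that the $p$-component of $\Phi(x)-\Phi(y)$ is itself $0$. The repair is easy but must be stated: for every $n$, congruence preservation of $\varphi_n$ forces the $p$-primary part of $\pi_{n!}(\Phi(x)-\Phi(y))$ to be divisible by that of $\pi_{n!}(x-y)$, which is $0$, and divisibility by $0$ means equality to $0$; hence $v_p(\Phi(x)-\Phi(y))\geq v_p(n!)$ for every $n$, and letting $n\to\infty$ gives $v_p(\Phi(x)-\Phi(y))=\infty$. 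With this case added, your prime-by-prime criterion does yield $(x-y)\mid\Phi(x)-\Phi(y)$ in $\Zhat$; without it, the proof as written fails exactly on differences $x-y$ that are zero divisors.
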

\begin{figure}{
$$
\begin{array}{c}
\xymatrix{
\Z/p^n\Z 
\ar[r]^{\text{\normalsize$\iota^p_n$}}
\ar[dr]^{\textit{\normalsize Id}}
&\Z_p  \ar[rr]^{\text{\normalsize$\Phi$}}
           \ar[d]^{\text{\normalsize$\pi^p_n$}}
&&\Z_p  \ar[d]^{\text{\normalsize$\pi^p_n$}}
\\
&\Z/p^n\Z  \ar[rr]^{\text{\normalsize$\varphi_n$}}
                    \ar[d]^{\text{\normalsize$\pi^p_{n,m}$}}
&&\Z/p^n\Z  \ar[d]^{\text{\normalsize$\pi^p_{n,m}$}}
\\
\Z/p^m\Z  \ar[r]_{\textit{\normalsize Id}}
                 \ar[ur]^{\text{\normalsize$\iota^p_{m,n}$}}
&\Z/p^m\Z \ar[rr]_{\text{\normalsize$\varphi_m$}}
&&\Z/p^m\Z }
\end{array}
$$
\caption{$\Phi$ as the inverse limit of the $\varphi_n$'s, $n\in\N$.}\label{fig:PhinInverse}}
\end{figure}
\begin{proof}
(1) and (2) are also equivalent to (3) and (4) below.
\begin{itemize}
\item[(3)]
All $\varphi_n$'s are congruence preserving and,
for all $1\leq m\leq n$, the diagrams of Figure~\ref{fig:PhinInverse} commute.
\item[(4)]
All $\varphi_n$'s are congruence preserving and,
for all $1\leq m\leq n$, the lower half
(dealing with $\varphi_n$ and $\varphi_m$) of the diagrams of Figure~\ref{fig:PhinInverse}  commute.
\end{itemize}
\textbullet\
$(2)\Leftrightarrow(3)\Leftrightarrow(4)$.
Instantiate Proposition~\ref{p:cont are inverse}
with $\mu$ the identity on $\N$.
\\
\textbullet\
$(1)\Rightarrow(2)$.
Proposition~\ref{p:unif cont}  insures that $\Phi$ is $1$-Lipschitz.
We show that $\varphi_n$ is congruence preserving.
Since $\Phi$ is congruence preserving,
if $x,y\in\Z/p^n\Z$ then 
$\iota_n^p(x)-\iota_n^p(y)$ divides $\Phi(\iota_n^p(x))-\Phi(\iota_n^p(y))$.
Now, the canonical projection $\pi_n^p$ is a morphism hence
$\pi_n^p(\iota_n^p(x))-\pi_n^p(\iota_n^p(y))$ divides 
$\pi_n^p(\Phi(\iota_n^p(x)))-\pi_n^p(\Phi(\iota_n^p(y)))$.
Recall that   $\pi_n^p\circ\iota_n^p$ is the identity on $\Z/p^n\Z$.
Thus, $x-y$ divides
$\pi_n^p(\Phi(\iota_n^p(x)))-\pi_n^p(\Phi(\iota_n^p(y)))
=\varphi_n(x)-\varphi_n(y)$ as wanted.
\\
\textbullet\
$(4)\Rightarrow(1)$.
The fact that $\Phi$ lifts $\varphi_n$ shows that
two elements of $\Z_p$ with the same first $n$ digits
(in the $p$-adic representation)
have images with the same first $n$ digits. This proves that $\Phi$ is 
$1$-Lipschitz.
\\
It remains to prove that $\Phi$ is congruence preserving.
Let $x,y\in\Z_p$. Since $\varphi_n$ is congruence preserving
$\pi_n^p(x)-\pi_n^p(y)$ divides 
$\varphi_n(\pi_n^p(x))-\varphi_n(\pi_n^p(y))$.
Let
$$
U_n^{x,y}=\{u\in\Z/p^n\Z\mid 
\varphi_n(\pi_n^p(x))-\varphi_n(\pi_n^p(y))
=(\pi_n^p(x)-\pi_n^p(y))\, u\}\,.
$$
If $m\leq n$ and $u\in U_n^{x,y}$ then,
applying $\pi^p_{n,m}$ to the equality defining $U_n^{x,y}$,
and using the commutative diagrams of Figure~\ref{fig:PhinInverse} , we get
\begin{eqnarray*}
\varphi_n(\pi_n^p(x))-\varphi_n(\pi_n^p(y))
&=&(\pi_n^p(x)-\pi_n^p(y))\, u
\\
\pi^p_{n,m}(\varphi_n(\pi_n^p(x)))
-\pi^p_{n,m}(\varphi_n(\pi_n^p(y)))
&=&(\pi^p_{n,m}(\pi_n^p(x))- \pi^p_{n,m}(\pi_n^p(y)))\,
 \pi^p_{n,m}(u)
\\
\varphi_m(\pi^p_{n,m}(\pi_n^p(x)))
- \varphi_m(\pi^p_{n,m}(\pi_n^p(y)))
&=&(\pi^p_{n,m}(\pi_n^p(x))- \pi^p_{n,m}(\pi_n^p(y)))\,
 \pi^p_{n,m}(u)
\\
\varphi_m(\pi_m^p(x)) -\varphi_m(\pi_m^p(y))
&=&(\pi^p_m(x)- \pi^p_m(y))\, \pi^p_{n,m}(u)
\end{eqnarray*}
Thus, if $u\in U_n^{x,y}$ then $\pi^p_{n,m}(u)\in U_m^{x,y}$.

Consider the tree $\+T$ of finite sequences
$(u_0,\ldots,u_n)$ such that $u_i\in U^{x,y}_i$
and $u_i=\pi^p_{n,i}(u_n)$ for all $i=0,\ldots,n$.
Since each $U_n^{x,y}$ is nonempty, the tree $\+T$ is infinite.
Since it is at most $p$-branching, using K\"onig's  Lemma,
we can pick an infinite branch $(u_n)_{n\in\N}$ in $\+T$.
This branch defines an element $z\in\Z_p$.
The commutative diagrams of Figure~\ref{fig:PhinInverse} 
show that the sequences
$(\pi_n^p(x)-\pi_n^p(y))_{n\in\N}$
and $\varphi_n(\pi_n^p(x))-\varphi_n(\pi_n^p(y))$
represent $x-y$ and $\Phi(x)-\Phi(y)$ in $Z_p$.
Equalities
$\varphi_m(\pi^p_m(x))-\varphi_m(\pi^p_m(y))
=(\pi^p_m(x)-\pi^p_m(y))\ \pi_{n,m}(u)$ 
show that (going to the projective limits) $\Phi(x)-\Phi(y)=(x-y)\,z$.
This proves that $\Phi$ is congruence preserving.
\end{proof}
%
%
%
Congruence preserving functions $\Zhat \to\Zhat$
are determined by their restrictions to $\N$ since $\N$ is dense in $\Zhat$.
Let us state a (partial) converse result. 
\begin{theorem}\label{extension}
Every congruence preserving function $F:\N\to\Z$ has a unique extension
to a congruence preserving function $\Phi:\Z_p\to\Z_p$
(resp. $\Zhat\to\Zhat$).
\end{theorem}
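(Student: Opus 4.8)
The plan is to build $\Phi$ as the unique continuous extension of $F$ and then to check that congruence preservation survives the limit process.

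First I would observe that $F$ itself is $1$-Lipschitz for the relevant ultrametric, by exactly the computation of Proposition~\ref{p:unif cont}: if $p^n$ (resp. $n!$) divides $a-b$ for $a,b\in\N$, then since $a-b$ divides $F(a)-F(b)$ in $\Z$, the same power $p^n$ (resp. $n!$) divides $F(a)-F(b)$, so $d(F(a),F(b))\leq d(a,b)$. Thus $F\colon\N\to\Z\subseteq\Z_p$ is uniformly continuous. Since $\N$ is dense in the complete (indeed compact) ring $\Z_p$ (resp. $\Zhat$), the standard extension theorem for uniformly continuous maps into a complete space yields a unique continuous---and again $1$-Lipschitz---function $\Phi\colon\Z_p\to\Z_p$ agreeing with $F$ on $\N$. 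Uniqueness is then immediate: any two continuous extensions coincide on the dense set $\N$, hence everywhere, and by Proposition~\ref{p:unif cont} every congruence preserving extension is continuous, so there is at most one congruence preserving extension.

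It then remains to show that this $\Phi$ is congruence preserving, which is the real content. Fix $x,y\in\Z_p$ and pick sequences $(a_k),(b_k)$ in $\N$ with $a_k\to x$ and $b_k\to y$; by continuity $F(a_k)\to\Phi(x)$ and $F(b_k)\to\Phi(y)$, while $a_k-b_k\to x-y$. For each $k$, congruence preservation of $F$ supplies $d_k\in\Z$ with $F(a_k)-F(b_k)=(a_k-b_k)\,d_k$. Here compactness of $\Z_p$ (resp. $\Zhat$) enters: the sequence $(d_k)$, viewed in $\Z_p$, has a convergent subsequence $d_{k_j}\to z\in\Z_p$. Passing to the limit along this subsequence and using that addition, subtraction and multiplication are continuous in the topological ring $\Z_p$, the identity $F(a_{k_j})-F(b_{k_j})=(a_{k_j}-b_{k_j})\,d_{k_j}$ becomes $\Phi(x)-\Phi(y)=(x-y)\,z$, so $x-y$ divides $\Phi(x)-\Phi(y)$ in $\Z_p$. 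The profinite case is identical, replacing $p^n$ by $n!$ throughout.

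The main obstacle is precisely the control of the ``quotients'' $d_k$ in this last step: on $\N$ they are honest integers, but to obtain a divisor $z$ of $\Phi(x)-\Phi(y)$ in $\Z_p$ one must extract a limit, and this is exactly where compactness of $\Z_p$ (resp. $\Zhat$) is indispensable---without it the $d_k$ need not accumulate. Alternatively, one could route the argument through Theorem~\ref{thm:cp are inverse}, realizing $\Phi$ as the inverse limit of the congruence preserving maps $\varphi_n=\pi_n^p\circ\Phi\circ\iota_n^p$, but the direct compactness argument is shorter.
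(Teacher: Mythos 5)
Your proof is correct, and its first half (density of $\N$, the $1$-Lipschitz estimate from the computation of Proposition~\ref{p:unif cont}, and the standard unique extension of a uniformly continuous map into a complete space) coincides with the paper's. Where you genuinely diverge is in verifying that the extension $\Phi$ is congruence preserving. The paper never touches sequences of quotients: it observes that $\varphi_n=\pi_n^p\circ\Phi\circ\iota_n^p$ equals $\pi_n^p\circ F\circ\iota_n^p$ because $\iota_n^p$ takes values in $\N$, so each $\varphi_n$ inherits congruence preservation directly from $F$, and then it invokes the implication $(2)\Rightarrow(1)$ of Theorem~\ref{thm:cp are inverse} --- precisely the alternative route you mention in your last sentence. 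Your route instead proves divisibility by hand: approximate $x,y\in\Z_p$ by naturals $a_k,b_k$, take the integer quotients $d_k$ with $F(a_k)-F(b_k)=(a_k-b_k)d_k$, and use sequential compactness of the compact metric ring $\Z_p$ (resp.\ $\Zhat$) together with continuity of the ring operations to extract a limit divisor $z$ with $\Phi(x)-\Phi(y)=(x-y)z$. This is sound and self-contained; indeed it is the sequential-compactness analogue of the K\"onig's lemma argument that the paper hides inside the proof of $(4)\Rightarrow(1)$ of Theorem~\ref{thm:cp are inverse}, applied here directly rather than through the inverse-limit formalism. The paper's route buys brevity and reuse of machinery already established; yours buys independence from Theorem~\ref{thm:cp are inverse} and makes explicit exactly where compactness is needed (the quotients $d_k$ need not converge themselves, only accumulate). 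Your uniqueness argument (continuous extensions agree on a dense set, and any congruence preserving extension is continuous by Proposition~\ref{p:unif cont}) is also complete and matches the paper's intent.
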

\begin{proof}
Observe that $\N$ is dense in $\Z_p$ (resp. $\Zhat$) and 
congruence preservation implies uniform continuity.
Thus, $F$ has a unique uniformly continuous extension $\Phi$
to $\Z_p$ (resp. $\Zhat$).
To show that this extension $\Phi$ is congruence preserving,
observe that $\Phi$ is the inverse limit of the
$\varphi_n=\rho_n\circ\Phi\circ\iota_n$'s.
Now, since $\iota_n$ has range $\N$, we see that
$\varphi_n=\rho_n\circ F\circ\iota_n$ hence is congruence preserving
as is $F$.
Finally, Theorem~\ref{thm:cp are inverse} insures that
$\Phi$ is also congruence preserving.
\end{proof}

\noindent Polynomials in $\Z_p[X]$ obviously define 
congruence preserving functions $\Z_p\to\Z_p$.
But non polynomial functions can  also  be congruence preserving.

\begin{consequence}
The extensions to $\Z_p$ and $\Zhat$ of 
the $\N\to\Z$ functions   \cite{cgg14,cgg14a} 
$$
x\mapsto \lfloor e^{1/a}\,a^x \, x!\rfloor
\text{\quad(for $a\in\Z\setminus\{0,1\}$)}
\quad,\quad
x\mapsto\texttt{if $x=0$ then $1$ else $\lfloor e\,x!\rfloor$}
$$
and the Bessel like function 
$
f(n)= 
\sqrt{\dfrac{e}{\pi}}
\times  \dfrac{\Gamma(1/2)}{2\times4^n\times n!}
\displaystyle\int_1^\infty e^{-t/2}(t^2-1)^n dt
$\ are congruence preserving.
\end{consequence}

%

\noindent We now  characterize
 congruence preserving functions via their  representation as infinite linear sums of the $P_k$s;
 this representation is similar to Mahler's characterization  for  continuous functions (Theorem \ref{mahler}).
First recall the notion of valuation.
\begin{definition}
The $p$-valuation (resp. the factorial valuation) $\val(x)$
of $x\in\Z_p$, or $x\in\Z/p^n\Z$ (resp.  $x\in\Zhat$)
is the largest $s$ such that $p^s$ (resp. $s!$) divides $x$
or is $+\infty$ in case $x=0$.
It is also the length of the initial block of zeros
in the $p$-adic (resp. factorial) representation of $x$.
\end{definition}

Note that for any polynomial $P_k$ (or more generally any polynomial), the below diagram commutes for any $m\leq n$ (recall that $P_k^{p^n,p^n}=\ \pi_{p^n}\circ P_k\circ \iota_{p^n}$):
$$\quad\begin{CD}
 \Z/p^n\Z  @>\text{\normalsize$P_k^{p^n,p^n}$}>>   \Z/p^n\Z  \\
@V\text{\normalsize$\pi_{p^n,p^m}$}VV        @VV\text{\normalsize$\pi_{p^n,p^m}$}V\\
\Z/p^m\Z  @>\text{\normalsize$P_k^{p^m,p^m}$}>>  \Z/p^m\Z
\end{CD}\qquad\ \ \  {\text{ i.e.} }\quad
\pi_{p^n,p^m}\circ P_k^{p^n,p^n}=P_k^{p^m,p^m}\circ \pi_{p^n,p^m}$$

We now can define the interpretation $\widehat{P_k}(x)$ of ${P_k}(x)$ in $\Z_p$ (similar for $\Zhat$).
\begin{definition}\label{def:Pk Zp}
Define $\widehat{P_k}\colon\Z_p\to\Z_p$ as $\widehat{P_k}={\underleftarrow{\lim}}_{n\in\N} P_k^{p^n,p^n}$ . For $x\in\Z_p$, $x=({\underleftarrow{\lim}}_{n\in\N} x_n)$,  we have $\widehat{P_k}(x)\!=\!{\underleftarrow{\lim}}_{n\in\N}  \pi_{p^n}(P_k(\iota_{p^n}(x_n)\!)\!)$.
\end{definition}
Moreover, the below diagrams commute for all $n$

$$\begin{CD}
\Z_p@>\text{\normalsize$\widehat{P_k}$}>>   \Z_p \\
@V\text{\normalsize$\pi_n^p$} VV        @VV\text{\normalsize$\pi_n^p$} V\\
 \Z/p^n\Z      @>\text{\normalsize${P_k^{p^n,p^n}}$}>>   \Z/p^n\Z  \\
@V\text{\normalsize$\iota_{p^n}$}VV        @VV\text{\normalsize$\iota_{p^n}$}V\\
\N   @>\text{\normalsize$P_k$}>>  \N
\end{CD}
$$

\begin{theorem}[Mahler, 1956 \cite{mahler56}]\label{mahler}
1. A  series $\sum_{k\in\N} a_k\widehat{P_k} (x)$, $a_k\in\Z_p$, is convergent in $\Z_p$
if and only if  $\lim_{k\to\infty}a_k=0$, i.e.  the corresponding sequence of valuations $(\val(a_k))_{k\in\N}$  tends to $+\infty$.
\\
2. The above series represent all uniformly continuous functions
$\Z_p\to\Z_p$.
\\
Idem with $\Zhat$.
\end{theorem}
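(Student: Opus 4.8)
The plan is to work in the non-archimedean module $C(\Z_p,\Z_p)$ of continuous functions equipped with the sup norm $\|g\|=\sup_{x\in\Z_p}\lvert g(x)\rvert_p$, prove the two assertions for $\Z_p$, and then transfer to $\Zhat$. For assertion~1 the decisive observation is that $\sup_{x\in\Z_p}\lvert\widehat{P_k}(x)\rvert_p=1$: indeed $\widehat{P_k}(x)=\binom{x}{k}$ lies in $\Z_p$ for every $x\in\Z_p$, so the supremum is at most $1$, while $\widehat{P_k}(k)=\binom{k}{k}=1$ shows it is attained. Hence $\|a_k\widehat{P_k}\|=\lvert a_k\rvert_p$, and since the norm is ultrametric and $C(\Z_p,\Z_p)$ is complete, the series $\sum_k a_k\widehat{P_k}$ converges uniformly if and only if its general term tends to $0$, i.e. if and only if $\lvert a_k\rvert_p\to 0$, which is exactly $\val(a_k)\to+\infty$. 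The ``only if'' already holds for pointwise convergence: evaluating at $x=-1$, where $\binom{-1}{k}=(-1)^k$ has absolute value $1$, forces $a_k\to 0$.

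For assertion~2 I would recover the Mahler coefficients through finite differences. Put $\Delta g(x)=g(x+1)-g(x)$ and set $a_k=\Delta^k f(0)=\sum_{j=0}^{k}(-1)^{k-j}\binom{k}{j}f(j)$; as this is a $\Z$-combination of values of $f$, each $a_k$ lies in $\Z_p$. Newton's forward-difference formula gives, for every $x\in\N$, the \emph{finite} identity $f(x)=\sum_{k=0}^{x}a_k\binom{x}{k}$, so $f$ and $\sum_k a_k\widehat{P_k}$ agree on $\N$; once the series is known to converge, both sides are continuous and coincide on the dense subset $\N$, hence everywhere. Thus the whole content of assertion~2 reduces to proving the decay $a_k\to 0$.

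The decay estimate is the step I expect to be the main obstacle. First, $\|\Delta g\|\le\|g\|$ by the ultrametric inequality, so $k\mapsto\|\Delta^k f\|$ is non-increasing. Next, since $\binom{p^s}{j}$ is divisible by $p$ whenever $0<j<p^s$, one gets $\Delta^{p^s}f(x)\equiv f(x+p^s)-f(x)\pmod p$ for all $x$; because $\Z_p$ is compact, $f$ is uniformly continuous, and $\val\bigl((x+p^s)-x\bigr)=s$, so $f(x+p^s)-f(x)\to 0$ uniformly in $x$. Consequently $\|\Delta^{p^s}f\|\le p^{-1}$ for all large $s$, whence $\|\Delta^k f\|\le p^{-1}$ for all large $k$ by monotonicity. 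Writing one such $\Delta^{k_0}f$ as $p\,g$ with $g\in C(\Z_p,\Z_p)$ and applying the same bound to $g$ gives $\|\Delta^{k}f\|=p^{-1}\|\Delta^{k-k_0}g\|\le p^{-2}$ for all large $k$; iterating, $\|\Delta^k f\|\le p^{-r}$ eventually, for every $r$. Therefore $\|\Delta^k f\|\to 0$ and in particular $a_k=\Delta^k f(0)\to 0$, completing the argument for $\Z_p$.

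For $\Zhat$ the convergence half (assertion~1) and the reduction via Newton's formula are verbatim the same, the only genuinely new point being the decay estimate: the congruence above is tied to prime powers, whereas the factorial-small shifts $\ell!$ natural to $\Zhat$ do not make $\binom{\ell!}{j}$ divisible by a fixed modulus. I would therefore exploit $\Zhat\cong\prod_p\Z_p$, establishing $p$-adic decay of $a_k$ in each component by the argument above and upgrading these to factorial-valuation decay using the single uniform modulus of continuity that $f$ possesses on the compact space $\Zhat$. Ensuring that the per-prime decays are compatible enough to yield $\val(a_k)\to+\infty$ in $\Zhat$ is the subtlest point of the transfer.
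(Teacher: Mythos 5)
Your $\Z_p$ half is correct and complete: assertion~1 via $\sup_{x\in\Z_p}\lvert\widehat{P_k}(x)\rvert_p=1$ together with evaluation at $x=-1$, and assertion~2 via Newton inversion plus the finite-difference bootstrap (the congruence $\Delta^{p^s}f\equiv f(\cdot+p^s)-f\pmod p$, monotonicity of $k\mapsto\|\Delta^kf\|$, and iteration after factoring out $p$). This is the classical Bojani\'c-style elementary proof of Mahler's theorem; since the paper states Theorem~\ref{mahler} as a citation to \cite{mahler56} and gives no proof, there is nothing to compare against here, and your argument stands on its own.

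The genuine gap is the $\Zhat$ half, and it is worse than the ``subtle transfer'' you anticipate: your strategy cannot be completed, because assertion~2 is \emph{false} over $\Zhat$ as stated. Concretely, write $x_p$ for the $p$-component of $x$ under $\Zhat\cong\prod_q\Z_q$ (Proposition~\ref{p:Zph}) and $f_p$ for the $p$-component of $f$. First, your componentwise Bojani\'c argument does not run: $f_p$ is a continuous map $\Zhat\to\Z_p$, not $\Z_p\to\Z_p$, and the key step needs $f_p(x+p^s)-f_p(x)$ to be uniformly small; but $p^s\not\to0$ in $\Zhat$ (its factorial valuation is at most $2$ for every $s$, since $3!$ never divides $p^s$), so uniform continuity of $f$ on $\Zhat$ gives no control over these shifts. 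Second, no other argument can close the gap, for a structural reason: by density of $\N$ and continuity, the $p$-component of $\widehat{P_k}(x)$ equals $\binom{x_p}{k}$, hence \emph{every} convergent series $\sum_k a_k\widehat{P_k}$ defines a function whose output's $p$-component depends only on $x_p$. General uniformly continuous functions on $\Zhat$ lack this property. Explicitly, let $f(x)$ be the element of $\{0,1,2\}\subset\Zhat$ with $f(x)\equiv x\pmod 3$; it is locally constant, hence uniformly continuous. Its forced coefficients $a_k=\Delta^kf(0)$, computed from the values $0,1,2,0,1,2,\dots$ on $\N$, satisfy $a_1=1$, $a_2=0$ and $a_{k+2}=-3(a_{k+1}+a_k)$ for $k\geq1$ (since $f$ is $3$-periodic on $\N$, $(1+\Delta)^3f=f$, i.e.\ $\Delta(\Delta^2+3\Delta+3)f=0$). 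Reducing mod $2$ gives the Fibonacci-type pattern $1,0,1,1,0,1,1,0,\dots$, so infinitely many $a_k$ are odd, $\val(a_k)\not\to+\infty$, and by assertion~1 (your own evaluation at $x=-1$) the series diverges: $f$ has no Mahler representation.

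What survives --- and it is all that is needed for the $\Zhat$ version of Theorem~\ref{thm:repCPZp} --- is the statement for \emph{congruence preserving} $f$. If $f$ is congruence preserving and $x_p=y_p$, then $(x-y)_p=0$, and $f(x)-f(y)=(x-y)z$ forces $(f(x)-f(y))_p=0$; so each $f_p$ factors through a map $\Z_p\to\Z_p$, which is itself congruence preserving, hence $1$-Lipschitz, and your $\Z_p$ argument applies to it. Moreover the ``upgrading'' step you worried about is free: convergence to $0$ in $\Zhat\cong\prod_p\Z_p$ is exactly componentwise convergence to $0$, since a finite intersection of cofinite sets is cofinite. So your plan is sound precisely on the subclass of functions whose $p$-components factor through $x_p$ (in particular congruence preserving ones), but for ``all uniformly continuous functions $\Zhat\to\Zhat$'' the claimed theorem itself, not just your proof, breaks down.
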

We can also characterize
of congruence preserving functions via their  representation as infinite linear sums of the $P_k$s.

 \begin{theorem}\label{thm:repCPZp}
 A function $\Phi:\Z_p\to\Z_p$ represented by a series $\Phi=\sum_{k\in\N}a_k\widehat{P_k}$ is congruence preserving if and only if
 $\lcm(k)$ divides $a_k$ for all $k$, i.e.  $a_k=
p^ib_k$ for $k\geq p^i$.
 \end{theorem}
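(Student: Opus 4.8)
The plan is to prove both directions by reducing to the finite case via the inverse limit representation established in Theorem~\ref{thm:cp are inverse}. The key observation is that $\Phi=\sum_{k\in\N}a_k\widehat{P_k}$ is congruence preserving if and only if each finite approximation $\varphi_n=\pi_n^p\circ\Phi\circ\iota_n^p$ is congruence preserving on $\Z/p^n\Z$, and the commutative diagrams relating $\widehat{P_k}$ to $P_k^{p^n,p^n}$ let me pass the series coefficient-by-coefficient down to $\Z/p^n\Z$.

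First I would establish the sufficiency direction. Suppose $\lcm(k)$ divides $a_k$ for all $k$. For each fixed $n$, writing $a_k=p^{i}b_k$ whenever $k\geq p^{i}$, I note that $\pi_n^p(a_k)=0$ as soon as $p^n$ divides $\lcm(k)$, i.e.\ for all $k\geq p^n$ (more precisely for $k$ large enough that $\val(\lcm(k))\geq n$). Hence the series $\Phi=\sum_{k}a_k\widehat{P_k}$ projects under $\pi_n^p$ to a \emph{finite} sum, and using the commuting diagrams for $\widehat{P_k}$ I get $\varphi_n=\sum_{k<\nu}\pi_n^p(a_k)\,P_k^{p^n,p^n}$ for a suitable bound $\nu$. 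By Lemma~\ref{akPk-is-CP} each summand $\pi_n^p(a_k)P_k^{p^n,p^n}$ is congruence preserving (since $\lcm(k)$ divides $a_k$), and a finite sum of congruence preserving functions is congruence preserving, so $\varphi_n$ is congruence preserving for every $n$. As $\Phi$ is the inverse limit of the $\varphi_n$'s and is $1$-Lipschitz (being uniformly continuous by Mahler, Theorem~\ref{mahler}), Theorem~\ref{thm:cp are inverse} yields that $\Phi$ is congruence preserving.

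For the necessity direction, suppose $\Phi$ is congruence preserving. By Theorem~\ref{thm:cp are inverse} every $\varphi_n$ is congruence preserving, and by the diagrams each $\varphi_n=\sum_{k}\pi_n^p(a_k)\,P_k^{p^n,p^n}$. The strategy is to isolate the coefficient $a_k$ by a finite-differencing argument: evaluating the congruence-preservation inequality of $\varphi_n$ at well-chosen pairs of points forces divisibility conditions on $\pi_n^p(a_k)$. Concretely, I would argue by induction on $k$ that $\lcm(k)$ must divide $a_k$ modulo $p^n$ for all $n$, and then let $n\to\infty$. The cleanest route is to invoke the already-established finite characterization: applying the congruence-preservation of $\varphi_n$ together with the uniqueness of Mahler/Newton coefficients of a function $\Z/p^n\Z\to\Z/p^n\Z$, one extracts that $\val(\lcm(k))\leq\val(a_k)$ in $\Z_p$, which is exactly $\lcm(k)\mid a_k$ since $\val(\lcm(k))$ equals the largest $i$ with $p^i\leq k$.

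The main obstacle will be the necessity direction, specifically making the coefficient-extraction rigorous: I must show that congruence preservation of $\Phi$ forces each individual $p$-adic coefficient $a_k$ to have valuation at least $\val(\lcm(k))$, rather than merely forcing a condition on finite truncations that could fail to lift. The delicate point is that the binomial polynomials $P_k$ do not descend injectively to $\Z/p^n\Z$ (indeed $P_k^{p^n,p^n}$ vanishes once $\lcm(k)$ is divisible by $p^n$), so I cannot naively read off $a_k$ from $\varphi_n$ for small $n$; I must take $n$ large enough (depending on $k$) that $P_k^{p^n,p^n}$ genuinely detects $a_k$, and then invoke that congruence preservation of $\varphi_n$ for such $n$ already entails $\lcm(k)\mid a_k\pmod{p^n}$. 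Pushing this through uniformly in $n$, and passing to the limit so that the divisibility holds in $\Z_p$ rather than just modulo each $p^n$, is where the argument requires care. The profinite case $\Zhat$ is entirely analogous, replacing $p^n$ by $n!$ throughout.
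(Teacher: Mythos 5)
Your proposal is correct and takes essentially the same route as the paper: reduce to the finite quotients via Theorem~\ref{thm:cp are inverse}, identify $\varphi_n=\sum_k\pi_n^p(a_k)\,P_k^{p^n,p^n}$, apply Corollary~\ref{cor:mainZ/nZ} together with the basis property of the $P_k^{p^n,p^n}$ (from \cite{cgg15}) to get $\lcm(k)\mid\pi_n^p(a_k)$, and convert this to $\lcm(k)\mid a_k$ in $\Z_p$ via valuations (Lemma~\ref{l:lcmEtVal}). The difficulties you flag at the end are minor and resolve exactly as you indicate (fix $k$, take $n$ with $p^n>k$; divisibility by $p^{\nu_p(k)}$ modulo a single such $p^n$ already gives divisibility in $\Z_p$), and your one small misstatement---that Mahler's theorem makes $\Phi$ $1$-Lipschitz, whereas uniform continuity alone does not give this---is harmless, since the commuting diagrams you establish yield the $1$-Lipschitz property exactly as in the proof of Theorem~\ref{thm:cp are inverse}.
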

\begin{proof} 
Suppose $\Phi$ is congruence preserving. By Theorem \ref{thm:cp are inverse}, $\Phi$ is uniformly continuous and by
Theorem \ref{mahler}, $\Phi=\sum_{k\in\N} a_k\widehat{P_k}$ with  $a_k\in\Z_p$.
Substituting in $\varphi_n=\pi^p_n\circ\Phi\circ\iota_n^p$, we get 
$\varphi_n=\pi^p_n\circ(\sum_{k\in\N} a_k\widehat{P_k})\circ\iota_n^p=
\sum_{k\in\N} \pi^p_n(a_k)\pi^p_n\circ \widehat{P_k} \circ\iota_n^p=
\sum_{k\in\N} \pi^p_n(a_k)P_k^{p^n,p^n}$\!\!.
Theorem~\ref{thm:cp are inverse} insures that $\Phi={\underleftarrow{\lim}}_{n\in\N} \varphi_n$ and the
$\varphi_n$ are congruence preserving on $\Z/p^n\Z$; thus by
Corollary~\ref{cor:mainZ/nZ}   :  $\varphi_n= \sum_{k=0}^{\nu(n)-1}b_k^nP_k^{p^n,p^n}$, with $\lcm(k)$
divides $b_k^n$ for all $k\leq \nu(n)-1$.
We proved in \cite{cgg15} that the $P_k^{p^n,p^n}$ form a basis of the functions on $\Z/p^n\Z$, hence
$\pi^p_n(a_k)=b_k^n$ and $\lcm(k)$ divides $\pi^p_n(a_k)$. Noting  that   $\val(a_k)=\val(\pi^p_n(a_k)) $ and  applying Lemma \ref{l:lcmEtVal}, we deduce that $\lcm(k)$ divides $a_k$, i.e. $\nu_p(k)\leq \val(a_k)$, and $a_k=p^{\nu_p(k)}b_k$. In particular, this implies that $d(a_k,0) \leq 2^{-\nu_p(k)}$ and thus
$\lim_{k\to\infty}a_k=0$.

Conversely, if $\Phi=\sum_{k\in\N}a_k\widehat{P_k}$  and 
$\lcm(k)$ divides $a_k$ for all $k$, then $\lcm(k)$ divides $\pi^p_n(a_k)$ for all $n,k$;
hence the associated $\varphi_n$ are congruence preserving
which implies that so is $\Phi$.
\end{proof}
\begin{lemma} \label{l:lcmEtVal} Let $\nu_p(k)$ be the largest $i$ such that $p^i\leq k<p^{i+1}$. In $\Z/p^n\Z$, $\lcm(k)$ divides a number $x$ iff $\nu_p(k)\leq \val(x)$.
  \end{lemma}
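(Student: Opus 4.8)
The plan is to translate divisibility in $\Z/p^n\Z$ into a comparison of $p$-valuations, so everything reduces to computing $\val(\lcm(k))$. First I would determine the $p$-valuation of $\lcm(1,\ldots,k)$ as an integer. Since the $p$-valuation of a least common multiple is the maximum of the $p$-valuations of its arguments, we have $\val(\lcm(k))=\max_{1\leq j\leq k}\val(j)$. This maximum is attained at $j=p^{\nu_p(k)}$, the largest power of $p$ not exceeding $k$: indeed $p^{\nu_p(k)}\leq k$, so it occurs among $1,\ldots,k$ and contributes valuation $\nu_p(k)$, whereas no $j\leq k$ can have valuation exceeding $\nu_p(k)$, as that would force $j\geq p^{\nu_p(k)+1}>k$ by the very definition of $\nu_p(k)$. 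Hence $\val(\lcm(k))=\nu_p(k)$ in $\Z$.

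Next I would invoke the ideal structure of $\Z/p^n\Z$. Its only ideals are the $(p^i)$ for $0\leq i\leq n$, and every nonzero element factors as $p^{\val(a)}u$ with $u$ a unit. Consequently $a$ divides $x$ in $\Z/p^n\Z$ precisely when $\val(a)\leq\val(x)$, using the convention $\val(0)=+\infty$ and capping finite valuations at $n-1$. Applying this criterion with $a=\lcm(k)$ and combining with the computation above yields the equivalence: $\lcm(k)\divi x$ iff $\val(\lcm(k))\leq\val(x)$ iff $\nu_p(k)\leq\val(x)$.

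Finally I would treat the boundary case separately, which is the only delicate point. As long as $\nu_p(k)<n$, the image of $\lcm(k)$ in $\Z/p^n\Z$ retains valuation $\nu_p(k)$ and the argument is transparent. When $\nu_p(k)\geq n$, however, $p^n$ divides $\lcm(k)$, so $\lcm(k)$ reduces to $0$ modulo $p^n$; it then divides $x$ only when $x=0$, i.e. when $\val(x)=+\infty$. This is again exactly the condition $\nu_p(k)\leq\val(x)$, since a finite number is $\leq+\infty$, while any nonzero $x$ has $\val(x)\leq n-1<\nu_p(k)$ and thus fails the inequality. I expect this bookkeeping around $\val(0)$ and the capping of valuations at $n$ to be the only subtle part; the remainder is a direct translation of divisibility into valuations.
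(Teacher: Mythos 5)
Your proof is correct and takes essentially the same route as the paper's: both arguments rest on the fact that every element of $\Z/p^n\Z$ coprime to $p$ is a unit, so that divisibility by $\lcm(k)$ reduces to divisibility by $p^{\nu_p(k)}$, which is the valuation inequality $\nu_p(k)\leq\val(x)$. Yours is simply more explicit than the paper's two-line proof — in particular in computing $\val(\lcm(k))=\nu_p(k)$ as a maximum of valuations and in checking the boundary case $\nu_p(k)\geq n$, both of which the paper leaves implicit.
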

\begin{proof} In $\Z/p^n\Z$ all numbers are invertible except multiples of $p$. Hence $\lcm(k)$ divides $x$
iff $p^{\nu_p(k)}$ divides $x$.\end{proof}

\begin{example} \label{CnonCP}
Let $\Phi=\sum_{k\in\N}a_k\,P_k$ with $a_k=p^{\nu_p(k)-1}$,  with $\nu_p(k)$ as in Lemma  \ref{l:lcmEtVal}.  $\Phi$ is uniformly continuous by Theorem \ref{mahler}. By Lemma  \ref{l:lcmEtVal} $\lcm(k)$ does not divide $a_k$ hence  by Theorem \ref{thm:repCPZp} $\Phi$ is {\em not }congruence preserving.
\end{example}
%

\section{Congruence preserving functions and lattices} 
\label{s:lattices}
%

A {\em lattice } of subsets of a set $X$ is a family of subsets of $X$
such that $L\cap M$ and $L\cup M$ are in $\+L$
whenever $L,M\in \+L$.
Let $f:X\to X$.
A lattice $\+L$ of subsets of $X$ is {\it closed} under $f^{-1}$ if
$f^{-1}(L)\in\+L$ whenever $L\in\+L$.
{\em Closure under decrement } means closure under $\suc^{-1}$, 
where $\suc$ is the successor function.
For $L\subseteq\Z$ and $t\in\Z$, let $L-t=\{x-t\mid x\in L\}$.
\begin{proposition}\label{p:LXL}
Let $X$ be $\N$ or $\Z$ or 
$\N_\alpha=\{x\in\Z\mid x\geq\alpha\}$ with $\alpha\in\Z$.
For $L$ a 
 subset of $ X$ let $\+L_X(L)$ be the family of sets of the form\
$\bigcup_{j\in J}\bigcap_{i\in I_j}X\cap(L-i)$
where $J$ and the $I_j$'s are finite non empty subsets of $\N$.
Then $\+L_X(L)$ is the smallest sublattice of  $\+P(X)$, the class of subsets of $X$, 
containing $L$ and closed under decrement.
\end{proposition}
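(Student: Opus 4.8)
The plan is to set up convenient notation and then prove the two inclusions that together express minimality. Write $L_i=X\cap(L-i)$ for $i\in\N$, so that $L_0=L$ (as $L\subseteq X$) and every member of $\+L_X(L)$ is a finite nonempty union of finite nonempty intersections of the sets $L_i$; in other words $\+L_X(L)$ is exactly the collection of ``positive'' combinations $\bigcup_{j\in J}\bigcap_{i\in I_j}L_i$. With this reformulation the whole argument reduces to three facts: $\+L_X(L)$ is a sublattice, it is closed under decrement, and the sets $L_i$ are forced into every sublattice that contains $L$ and is closed under decrement.

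First I would check that $\+L_X(L)$ is a sublattice of $\+P(X)$ containing $L$. Containment of $L$ is immediate (take $J=I_0=\{0\}$). Closure under $\cup$ is trivial, since the union of two sets of the prescribed form is again of that form after merging the index families $J$. Closure under $\cap$ is the only step needing a (routine) computation: by distributivity, $\big(\bigcup_{j}\bigcap_{i\in I_j}L_i\big)\cap\big(\bigcup_{j'}\bigcap_{i\in I'_{j'}}L_i\big)=\bigcup_{j,j'}\bigcap_{i\in I_j\cup I'_{j'}}L_i$, which is again of the required shape.

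Next I would establish closure under decrement. Since preimage under any map commutes with unions and intersections, $\suc^{-1}$ distributes over the union/intersection structure of a member of $\+L_X(L)$, so it suffices to compute $\suc^{-1}(L_i)$. Viewing $\suc$ as a function $X\to X$, which is legitimate precisely because $\N$, $\Z$ and $\N_\alpha$ are closed under the successor, one has $\suc^{-1}(L_i)=\{x\in X\mid x+1\in X\cap(L-i)\}$; here the condition $x+1\in X$ is automatic, so this set equals $\{x\in X\mid x+1+i\in L\}=X\cap(L-(i+1))=L_{i+1}$. Hence $\suc^{-1}(L_i)=L_{i+1}\in\+L_X(L)$, and closure under decrement follows. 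I expect this computation, and the observation that it works only because $X$ is closed under $\suc$, to be the real content of the statement; everything else is formal lattice manipulation.

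Finally, for minimality, let $\+M$ be any sublattice of $\+P(X)$ containing $L$ and closed under decrement. Starting from $L_0=L\in\+M$ and using $L_{i+1}=\suc^{-1}(L_i)$ together with closure under decrement, an induction on $i$ yields $L_i\in\+M$ for every $i\in\N$. Since $\+M$ is closed under binary, hence finite nonempty, unions and intersections, it contains every set $\bigcup_{j\in J}\bigcap_{i\in I_j}L_i$, that is $\+L_X(L)\subseteq\+M$. Combined with the first two paragraphs, this shows that $\+L_X(L)$ is itself such a sublattice and is contained in every other one, so it is the smallest, as claimed.
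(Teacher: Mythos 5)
Your proof is correct. The paper actually states Proposition~\ref{p:LXL} without giving any proof (it is the routine extension to $\Z$ and $\N_\alpha$ of the construction used for $\N$ in \cite{cgg14ipl}), and your argument supplies exactly the natural one: writing members as positive combinations of the sets $L_i=X\cap(L-i)$, checking $\cup$/$\cap$ closure by distributivity, computing $\suc^{-1}(L_i)=L_{i+1}$ (which, as you rightly emphasize, uses that each of $\N$, $\Z$, $\N_\alpha$ is closed under successor), and obtaining minimality by induction on $i$.
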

%
%
Let $f:\N\longrightarrow\N$ be a non decreasing function, 
the following conditions are equivalent  \cite{cgg14ipl}:

$(1)_\N$\quad 
For every finite subset $L$  of $\N$, the lattice $\+L_{\N}(L)$ is closed under $f^{-1}$.

$(2)_\N$\quad 
The function $f$ is congruence preserving and
$f(a)\geq a$ for all $a\in\N$.

$(3)_\N$\quad 
For every  regular subset $L$ of $\N$ the lattice $\+L_{\N}(L)$ is  closed under $f^{-1}$.

\smallskip\noindent
%
We now extend this result to functions $\Z\to\Z$.
First recall the notions of {\it recognizable} and {\it rational} subsets:
a subset $L$ of a monoid $X$ is rational if  it can be generated from  finite sets by unions, products and stars;
$L$ is recognizable if there exists a morphism $\varphi \colon X \longrightarrow M$, with $M$ a finite monoid, and $F$ a  subset of $M$ such that $L=\varphi^{-1}(F)$. For $\N$,  recognizable and rational subsets
coincide and are 
 are called regular subsets of $\N$. For $\Z$, recognizable subsets are finite unions of arithmetic
sequences, while rational subsets are unions of the form $F\cup P\cup -N$, with $F$ finite, 
and $P,N$ two regular subsets of $\N$; i.e. a recognizable subset of $\Z$ is also rational, but the converse is false.
%
It is known that

{\it 
1. A subset $L\subseteq\N$ is regular
if it is the union of a finite set with finitely many arithmetic progressions,
i.e. $L=F\cup(R+d\N)$ with
$d\geq1$, $F,R\subseteq\{x\mid0\leq x<d\}$
(possibly empty).

2. A subset $L\subseteq\Z$ is rational if it is of the form
$L=L^+\cup (-L^-)$ where $L^+,L^-$ are regular subsets of $\N$,
i.e. $L=-(d+S+d\N)\cup F\cup(d+R+d\N)$ with
$d\geq1$,
$R,S\subseteq\{x\mid0\leq x<d\}$,
$F\subseteq\{x\mid-d<x<d\}$
(possibly empty).See \cite{benois}.

3. A subset $L\subseteq\Z$ is recognizable if it is of the form
$L=(F+d\Z)$ with
$d\geq1$, $F\subseteq\{x\mid0\leq x<d\}$
}
%
%
\begin{theorem}\label{thm:mainZ1}
Let $f:\Z\longrightarrow\Z$ be a non decreasing function.
The following conditions are equivalent:

\noindent $(1)_\Z$\quad 
For every finite subset $L$  of $\Z$,
the lattice $\+L_\Z(L)$ is closed under $f^{-1}$.

\noindent $(2)_\Z$\quad 
The function $f$ is congruence preserving and
$f(a)\geq a$ for all $a\in\Z$.

\noindent $(3)_\Z$\quad For every  recognizable subset $L$ of $\Z$ the lattice $\+L_{\Z}(L)$ is  closed under $f^{-1}$.
\end{theorem}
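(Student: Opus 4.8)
The plan is to reduce everything to the already established equivalence $(1)_\N\Leftrightarrow(2)_\N\Leftrightarrow(3)_\N$ of \cite{cgg14ipl}, the bridge being a modular reformulation of congruence preservation. First I would record the following reformulation of Definition~\ref{def:idr}: a function $f\colon\Z\to\Z$ is congruence preserving if and only if for every modulus $d\geq1$ and all $a\equiv b\pmod d$ one has $f(a)\equiv f(b)\pmod d$; equivalently $f$ descends to a map $\overline f_d\colon\Z/d\Z\to\Z/d\Z$, equivalently $f^{-1}(F+d\Z)$ is again a union of classes modulo $d$ for every $F$. (The nontrivial direction takes $d=|a-b|$.) I would also record a finite-fibre lemma: if $f$ is congruence preserving and non-constant then every fibre $f^{-1}(\{v\})$ is finite, since two arguments with the same image force the difference of $v$ and any other value of $f$ to be divisible by arbitrarily large integers.

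For the two constructive implications $(2)\Rightarrow(3)$ and $(2)\Rightarrow(1)$ I would argue as follows. Since $f^{-1}$ commutes with $\cup$ and $\cap$, and $\+L_\Z(L)$ is, by Proposition~\ref{p:LXL}, generated from the translates $L-i$ by these operations, it suffices to control $f^{-1}(L-i)$. For $(2)\Rightarrow(3)$ this is immediate from the reformulation: writing $L=F+d\Z$ with least period $m\mid d$, the lattice $\+L_\Z(L)$ is exactly the family of all period-$m$ subsets of $\Z$, and $f^{-1}$ sends a period-$m$ set to a period-$m$ set; here congruence preservation alone suffices. For $(2)\Rightarrow(1)$ I would pass to $\N$ by translation: given a finite $L$, the finite-fibre lemma together with $f(a)\ge a$ confines every element of $\+L_\Z(L)$ and its $f$-preimage to a bounded interval $[c,\max L]$; on $\{x\ge c\}$ the conjugate $g(n)=f(n+c)-c$ is congruence preserving, non-decreasing and satisfies $g(n)\ge n$, hence is an instance of $(2)_\N$, and the translation $x\mapsto x-c$ is a lattice isomorphism intertwining $f^{-1}$ with $g^{-1}$, so $(2)_\N\Rightarrow(1)_\N$ transfers closure back to $f$.

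For the necessity implication $(1)\Rightarrow(2)$ I would first extract the order condition by testing closure on singletons: $\+L_\Z(\{a\})$ consists of the finite subsets of $(-\infty,a]$, so $f^{-1}(\{a\})\in\+L_\Z(\{a\})$ forces $f(x)=a\Rightarrow x\le a$, i.e. $f(x)\ge x$ for all $x$. Congruence preservation I would again obtain through the window conjugates $g(n)=f(n+c)-c$: closure of the lattices $\+L_\Z$ on finite sets transfers (via the same translation isomorphism, intersected with the half-line) to closure of the lattices $\+L_\N$ on finite sets for $g$, whence $(1)_\N\Rightarrow(2)_\N$ makes $g$ congruence preserving; as $c\to-\infty$ the windows exhaust $\Z$ and the reformulation upgrades this to congruence preservation of $f$ on all of $\Z$.

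The step I expect to be the genuine obstacle is closing the cycle through $(3)$. The difficulty is structural rather than computational: a recognizable set $F+d\Z$ is periodic, and as noted its lattice $\+L_\Z(F+d\Z)$ is the translation-invariant family of all period-$m$ sets, so closure on recognizable sets cannot by itself witness the order inequality $f(a)\ge a$ — for instance $x\mapsto x-1$ is non-decreasing, congruence preserving, and preserves $\+L_\Z$ of every recognizable set, yet fails $f(a)\ge a$. To recover $(2)$, and hence $(1)$, from $(3)$ one must therefore reintroduce the bounded test sets: the natural remedy is to run the argument with the \emph{rational} subsets of $\Z$ (which contain all finite sets, so that $(3)\Rightarrow(1)$ becomes immediate), or equivalently to adjoin the singleton computation of the previous paragraph. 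I would consequently organise the proof around the finite-set condition, establishing $(1)\Leftrightarrow(2)$ and $(2)\Rightarrow(3)$ directly as above and deriving the recognizable-set statement only after the order condition has been secured from the finite sets.
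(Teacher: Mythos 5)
Your diagnosis of the cycle through $(3)_\Z$ is correct, and it identifies a genuine error in the theorem as stated, not merely a gap in your own attempt. I checked your structural claim: if $L$ is recognizable, nonempty and proper, with least period $m\ge 2$, then for each $x$ the finite intersection $\bigcap\{L-i \mid 0\le i<m,\ x\in L-i\}$ equals $\{x\}+m\Z$ (any $z$ in it satisfies $(z-x)+A\subseteq A$, where $A$ is the image of $L$ in $\Z/m\Z$, hence $z\equiv x \pmod m$ because $A$ has trivial stabilizer), and $\bigcap_{i=0}^{m-1}(L-i)$ is translation invariant and proper, hence empty; so $\+L_\Z(L)$ is exactly the family of \emph{all} $m$-periodic subsets of $\Z$. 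That family is closed under $M\mapsto M+1$, so $f(x)=x-1$ is non-decreasing, congruence preserving, satisfies $(3)_\Z$, and fails $f(a)\ge a$. The paper disposes of $(3)_\Z\Rightarrow(2)_\Z$ with the single line ``similar to $(1)_\Z\Rightarrow(2)_\Z$'', and your analysis shows exactly why that cannot work: there the inequality $f(x)\geq x$ is extracted from the singleton lattices $\+L_\Z(\{f(x)\})$, and nonempty finite sets are regular in $\N$ but are not recognizable in $\Z$. What your argument actually establishes is that, for non-decreasing $f$, $(3)_\Z$ is equivalent to congruence preservation alone, so only $(1)_\Z\Leftrightarrow(2)_\Z\Rightarrow(3)_\Z$ survives.

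Two points in your proposal are nevertheless flawed. First, your preferred repair (quantifying $(3)$ over \emph{rational} sets) is refuted by the paper itself in the example immediately following the theorem: $f(x)=x^2$ satisfies $(2)_\Z$, $L=6+10\N$ is rational, yet $f^{-1}(L)\notin\+L_\Z(L)$, since $f^{-1}(L)$ has infinitely many negative elements while every member of $\+L_\Z(L)$ has only finitely many. So with rational sets the converse implication $(2)\Rightarrow(3)$ breaks; only your alternative remedy (recognizable \emph{or} finite test sets) is viable.

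Second, your proof of $(2)_\Z\Rightarrow(1)_\Z$ has a real gap. Writing $\N_c=\{x\in\Z\mid x\ge c\}$, the translation $x\mapsto x-c$ is \emph{not} a lattice isomorphism of $\+L_\Z(L)$ onto $\+L_{\N}(L-c)$: the family $\+L_\Z(L)$ contains $L-i$ for every $i\in\N$, so its members are not uniformly bounded below, and the comparison map $M\mapsto (M-c)\cap\N$ is a non-injective surjection. Pulling the $\N$-side closure back only yields $f^{-1}(M)=\bigcup_j\bigcap_i\bigl(\N_c\cap(L-i)\bigr)$, and the truncations cannot be discarded: for $L=\{0,10\}$ one has $L-1=\{-1,9\}\in\+L_\Z(L)$, but $\N_0\cap(L-1)=\{9\}\notin\+L_\Z(L)$, since every lattice element containing $9$ contains $-1$. (The same transfer inside your $(1)\Rightarrow(2)$ is in the harmless direction, because truncations of $\Z$-lattice elements \emph{are} $\N_c$-lattice elements; that part matches the paper's reduction to $\N_\alpha$.) The paper's route for $(2)_\Z\Rightarrow(1)_\Z$ avoids conjugation: it proves the $\Z$-version of the preimage identity (Lemma~\ref{l:f-1L}), $f^{-1}(M)=\bigcup_{a\in f^{-1}(M)}\bigcap_{t\in M-a}(M-t)$, and applies it to each $M$ in the lattice. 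To complete that argument you must also observe — the lemma's proof gives it, though the paper does not say so — that when $f(a)\ge a$ the witnesses $t$ can be taken in $\N$, i.e. $f^{-1}(M)=\bigcup_{a\in f^{-1}(M)}\bigcap_{t\in (M-a)\cap\N}(M-t)$; otherwise the formula involves increments $M-t$ with $t<0$, which need not belong to the lattice. With that restriction, finiteness of fibres makes the union finite, each intersection is a nonempty finite intersection of decrements of $M\in\+L_\Z(L)$, and closure follows.
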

\begin{proof}
For this proof, 
we need the $\Z$-version of  Lemma 3.1 in \cite{cgg14ipl}.
\begin{lemma}\label{l:f-1L}
Let $f :\Z\to\Z$ be a nondecreasing congruence preserving function.
Then, for any set $L \subseteq \Z$,
we have
$f^{-1}(L) \ =\ {\textstyle\bigcup_{a\in f^{-1}(L)}\bigcap_{t\in L-a} (L-t)}$\,.
\end{lemma}
\begin{proof}
Let $a \in f^{-1}(L)$.
As $t \in L-a\Leftrightarrow a \in L-t$,
we have $a\in\bigcap_{t\in L-a} L-t$,
proving inclusion $\subseteq$.

For the other inclusion, let $b \in\bigcap_{t\in L-a} L-t$
with $a \in f^{-1}(L)$.
To prove that $f(b)\in L$, we argue by way of contradiction.
Suppose $f(b)\notin L$.
Since $f(a)\in L$ we have $a\neq b$.
The condition on $f$ insures the existence of $k \in\Z$ such that
$f(b)-f(a) = k(b-a)$. In fact, $k\in\N$ since $f$ is nondecreasing.

Suppose first that $a < b$.
Since $k\in\N$ and $f(a)+k(b-a)=f(b)\notin L$
there exists a least $r \in\N$ such that $f(a) + r(b-a) \notin  L$.
Moreover, $r\geq 1$ since $f(a) \in L$.
Let $t = f(a) -a+ (r-1)(b-a)$.
By minimality of $r$, we get $t+a=f(a) + (r-1)(a-b) \in L$.
Now $t + a \in L$ implies $t  \in L-a$; as $b \in\bigcap_{t\in L-a} L-t$ this implies $b\in L-t$ hence  $t+b \in L$.
But $t+b=f(a) + r(b-a)\notin L$, this contradicts the definition of $r$.

Suppose next that $a > b$. 
Since $k\in\N$ and $f(b)+k(a-b)=f(a)\in L$
there exists a least $r \in\N$ such that $f(b) + r(a-b) \in  L$.
Moreover, $r\geq 1$ since $f(b) \notin L$.
Let $t = f(b)-b + (r-1)(a-b)$.
By minimality of $r$, we get $t+b=f(b) + (r-1)(a-b) \notin L$.
Now $t + a \in L$ implies $t+b \in L$, contradiction.\end{proof}

{$\bullet\ \ (1)_\Z \Rightarrow (2)_\Z$.}
Assume $(1)_\Z$ holds.
We first prove inequality $f(x)\geq x$ for all $x\in\Z$.
Observe that (by Proposition~\ref{p:LXL})
$\+L_\Z(\{z\})=\{X\in\FINZ\mid X=\emptyset\text{ or }\max X\leq z\}$.
In particular, letting $z=f(x)$ and applying $(1)_\Z$ with $\+L(\{f(x)\})$,
we get $f^{-1}(\{f(x\})\in\+L_\Z(\{f(x)\})$
hence $x\leq\max(f^{-1}(\{f(x\}))\leq f(x)$.

To show that $f$ is congruence preserving, we reduce to the $\N$ case.

For $\alpha\in\Z$, let $\suc_\alpha:\N_\alpha\to\N_\alpha$
be the successor function on $\N_\alpha=\{z\in\Z\mid z\geq\alpha\}$.
The structures $\langle\N,\suc\rangle$ and
$\langle\N_\alpha,\suc_\alpha\rangle$ are isomorphic.
Since $f(x)\geq x$ for all $x\in\Z$, the restriction $f\segment\N_\alpha$
maps $\N_\alpha$ into $\N_\alpha$.
In particular, using our result in $\N$,
conditions $(1)_{\N_\alpha}$ and  $(2)_{\N_\alpha}$
(relative to $f\segment\N_\alpha$) are equivalent.

We show that condition $(2)_{\N_\alpha}$ holds.
Let $L\subseteq\N_\alpha$ be finite.
Condition $(1)_\Z$ insures that $\+L_\Z(L)$ is closed under $f^{-1}$.
In particular, $f^{-1}(L)\in\+L_\Z(L)$. Using Proposition~\ref{p:LXL},
we get
$f^{-1}(L)\ =\ \textstyle\bigcup_{j\in J}\bigcap_{i\in I_j} (L-i)$
for finite $J$, $I_j$'s included in $\N$
hence
$(f\segment\N_\alpha)^{-1}(L)\ =\ f^{-1}(L)\cap\N_\alpha
\ =\ \bigcup_{j\in J}\bigcap_{i\in I_j}(\N_\alpha\cap (L-i))
\in\+L_{\N_\alpha}(L)$.
This proves condition $(1)_{\N_\alpha}$.
Since $(1)_{\N_\alpha}\Rightarrow(2)_{\N_\alpha}$
we see that $f\segment\N_\alpha$ is congruence preserving
Now, $\alpha$ is arbitrary in $\Z$ and the fact that
$f\segment\N_\alpha$  is congruence preserving for all $\alpha\in\Z$ implies that 
 $f$ is congruence preserving.
Thus,  condition $(2)_\Z$ holds.
\smallskip\\
{$\bullet\ \ (2)_\Z \Rightarrow(3)_\Z $. }
Assume $(2)_\Z$. It is enough to prove that $f^{-1}(L)\in\+L_{\Z}(L)$ whenever $L$  is recognizable.
Let  $L=(F+d\Z)$ with
$d\geq1$, $F=\{f_1,\cdots,f_n\}\subseteq\{x\mid0\leq x<d\}$. 
Then $f$ is not constant since $f(x)\geq x$ for all $x\in\Z$.
Also, $f^{-1}(\alpha)$ is finite for all $\alpha$~:
let $b$ be such that $f(b)=\beta\neq\alpha$,
by  congruence preservation
the nonzero integer $\alpha-\beta$ is divided by $a-b$ for all $a\in f^{-1}(\alpha)$
hence $f^{-1}(\alpha)$ is finite. $f^{-1}(F)$ is thus finite too. Moreover, 
$L-t=F-t+d\Z= L-t-d+d\Z=L-t-d=L-t+d+d\Z=L-t+d$, hence
  there are only  finitely many $L-t$ 's.
By Lemma~\ref{l:f-1L} we have
$f^{-1}(L)  \ =\ 
\bigcup_{a\in f^{-1}(F)} \bigcap_{t\in L-a}( L-t)$; as there are only a finite number of $L-t$ 's, 
all union and intersections reduce to finite unions and intersections and $f^{-1}(L)\in\+L_{\Z}(L)$.

\noindent{$\bullet\ \ (3)_\Z \Rightarrow(2)_\Z $. } Similar to {$(1)_\Z \Rightarrow(2)_\Z $. }

\noindent{$\bullet\ \ (2)_\Z \Rightarrow(1)_\Z $. } Similar to {$(2)_\Z \Rightarrow(3)_\Z $. }\qed\end{proof}

\begin{example} 
Theorem \ref{thm:mainZ1}  does not hold if we substitute  rational for recognizable in $(3)_\Z$.
 Consider
$L= (6+10 \N)$ and $f(x) = x^2$; $L$ is rational and $f$ is congruence preserving.
However $f^{-1}(L) = (\{4,6\}+10\N)\cup -(\{4,6\}+10\N)$ does not belong to $\+L_{\Z}(L)$:
$f^{-1}(L)$ contains infinitely many negative numbers, while each $L-t$ for $t\in f^{-1}(L)$ contains only finitely many negative numbers; hence any finite union of finite intersections of $L-t$ 's can contain only a finite number of negative numbers and cannot be equal to 
$f^{-1}(L)$.
\qed\end{example}

 Theorem \ref{thm:mainZ1} does not hold for $ \Z_p$: $f^{-1}(L)$ no longer belongs to $\+L_{\Z_p}(L)$ , the lattice of subsets of $\Z_p$ containing $L$ and closed under decrement.
 Consider the congruence preserving function $f(x)=\big(\sum_{i\geq 2} p^i\big) x$, and  let $L=\{\sum_{i\geq 2} p^i\}=\{f(1)\}$. Then $f^{-1}(L)=\{1\}\not\in \+L_{\Z_p}(L)$ because all elements of the $(L-i)$\,s end with an infinity of 1\,s.
 
 Thus integer decrements are not sufficient; but even if we substitute  translations for decrements,
 Theorem \ref{thm:mainZ1} can't be generalized.

A recognizable subset of  $ \Z_p$ is of the form  $F + p^n \Z_p$ with $F$ finite,  $F\subseteq \Z/p^n \Z$.  For $L$ a 
 subset of $ \Z_p$ let $\+L^c_{\Z_p}(L)$ (resp. $\+L_{\Z_p}(L)$)  be the family of sets of the form\
$\bigcup_{j\in J}\bigcap_{i\in I_j}(L+a_i)$, $a_i\in\Z_p$,
where $J$ and the $I_j$'s are (resp. finite)  non empty subsets of $\N$.
Then $\+L^c_{\Z_p}(L)$ (resp. $\+L_{\Z_p}(L)$)  is the smallest complete sublattice (resp. sublattice) of  $\+P(\Z_p)$, the class of subsets of $\Z_p$, 
containing $L$ and closed under translation.
It is easy to see that, 
for $f :\Z_p\to\Z_p$.
and $L \subseteq \Z/p^k\Z$,
we have
$f^{-1}(L) \ =\  {\textstyle\bigcup_{a\in f^{-1}(L)}\bigcap_{t\in L} (L+(a-t))}$\,.
%
%
Hence for any $f :\Z_p\to\Z_p$

$(1)_{\Z_p}$ \  If  $f^{-1}(a)$ is finite for every $a$, then
for every finite subset $L$  of $\Z_p$,
the lattice $\+L_{\Z_p}(L)$ is closed under $f^{-1}$.

 $(3)_{\Z_p}$\ 
For every  recognizable subset $L$ of ${\Z_p}$ the lattice $\+L^c_{\Z_p}(L)$ is  closed under $f^{-1}$.

%

However conditions $(1)_{\Z_p}$ and $(3)_{\Z_p}$ 
do not imply that $f$ is congruence preserving: 
 let $f$  be inductively defined on $\Z$ by: for $0\leq x<p$, $f(x)=x$, and for $x\geq p$, $f(x) = f(x-p)+1$. For $np\leq k < (n+1)p$, $f(k)=n+(k-np)$; hence $f$ is uniformly continuous and has a unique uniformly continuous 
 extension $\hat f$ to $\Z_p$;  then 
 $\hat f$ satisfies $(1)_{\Z_p}$ and $(3)_{\Z_p}$but is not congruence preserving as $p$ does not divide 1.

\section{Conclusion}
We here studied functions having  congruence preserving properties; these functions appeared in two ways at least: 
{\it  (i)}\/
as the functions such that lattices of regular subsets of $\N$ are closed under $f¬^{-1}$ (see \cite{cgg14ipl}), and
{\it  (ii)}\/
as the functions uniformly continuous in a variety of finite groups (see \cite{PinSilva2011}).

 The contribution of the present paper  is   to {\em characterize  congruence preserving functions} 
on various sets derived from $\Z$   such as  $\Z/n\Z,$  (resp. $\Z_p,\widehat{\Z}$) via polynomials (resp. series) with {\em rational coefficients} which share the following  common property:  $\lcm(k)$ divides the $k$-th coefficient.  Examples of  {\em non polynomial} (Bessel like) congruence preserving functions can be found in \cite{cgg14a}.
\section*{Acknowledgments} We thanks the anonymous referee for careful reading and valuable comments.

{\small

}

%
\section{Appendix}
\label{s:appendix}
%
%
Recall some classical equivalent approaches to
the topological rings of $p$-adic integers and profinite integers,
cf. Lenstra \cite{lenstra05,lenstra05a}, Lang \cite{lang} and 
Robert \cite{robert00}.
\begin{proposition}\label{p:Zp}
Let $p$ be prime.
The three following approaches lead to isomorphic structures,
called the topological ring $\Z_p$ of $p$-adic integers.
\begin{itemize}
\item
The ring $\Z_p$ is the inverse limit of the following inverse system:
\begin{itemize}
\item
the family of rings $\Z/p^n\Z$ for $n\in\N$,
endowed with the discrete topology,
\item
the family of surjective morphisms
$\pi_{p^n,p^m}:\Z/p^n\Z \to \Z/p^m\Z$ for $0\leq n\geq m$.
\end{itemize}
\item
The ring $\Z_p$ is the set of infinite sequences $\{0,\ldots,p-1\}^\N$
endowed with the Cantor topology and addition and multiplication
which extend the usual way to perform addition and multiplication
on base $p$ representations of natural integers.
\item
The ring $\Z_p$ is the Cauchy completion of the metric topological ring
$(\N,+,\times)$
relative to the following ultrametric: 
$d(x,x)=0$ and for $x\neq y$, $d(x,y)=2^{-n}$ where $n$ is the $p$-valuation of $|x-y|$,
i.e. the maximum $k$ such that $p^k$ divides $x-y$.
\end{itemize}
\end{proposition}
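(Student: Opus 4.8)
The plan is to exhibit explicit maps between the three constructions and to verify that each is simultaneously a ring isomorphism and a homeomorphism; by transitivity it suffices to connect the inverse limit $A_1=\varprojlim \Z/p^n\Z$ to each of the other two. Write $A_2=\{0,\ldots,p-1\}^\N$ for the second construction and $A_3$ for the Cauchy completion of $(\N,+,\times)$.

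First I would relate $A_1$ and $A_2$ via base-$p$ digit expansion. An element of $A_1$ is a coherent sequence $(x_n)_{n\in\N}$ with $x_n\in\Z/p^n\Z$ and $\pi_{p^{n+1},p^n}(x_{n+1})=x_n$. Writing $\iota_{p^n}(x_n)=\sum_{i<n}d_i\,p^i$ in base $p$, the coherence condition forces the digit $d_i$ to be independent of $n$ (for $n>i$), so one may define $\Phi\colon A_1\to A_2$ by $\Phi((x_n)_n)=(d_i)_{i\in\N}$. Its inverse sends $(d_i)_i$ to the coherent sequence $\bigl(\sum_{i<n}d_i\,p^i \bmod p^n\bigr)_n$, so $\Phi$ is a bijection. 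Because a basic neighbourhood in $A_1$ fixes one coordinate $x_n$ while a basic neighbourhood in $A_2$ fixes an initial block of digits, and these describe the same cylinders, $\Phi$ is a homeomorphism. Finally $\Phi$ is a ring isomorphism: the operations on $A_1$ are performed coordinatewise modulo $p^n$, and this is exactly base-$p$ addition and multiplication carried out up to position $n$.

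Next I would connect $A_1$ with the completion $A_3$ by the universal property of Cauchy completion. The map $a\mapsto(\pi_{p^n}(a))_n$ embeds $\N$ into $A_1$ as a subsemiring, and on $A_1$ one has the ultrametric $d(x,y)=2^{-v}$, where $v$ is the largest $n$ with $x_n=y_n$; this restricts to the stated $p$-adic metric on $\N$ and induces the inverse-limit topology (the ball of radius $2^{-n}$ about $x$ is precisely the cylinder fixing $x_n$). The copy of $\N$ is dense, since the integers $\iota_{p^n}(x_n)$ converge to $(x_n)_n$, and $A_1$ is complete, because a Cauchy sequence in $A_1$ is eventually constant in each (discrete) coordinate and those stable values form a coherent limit. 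Addition and multiplication are uniformly continuous on $A_1$ by an immediate ultrametric estimate, so by density they are the unique continuous extensions of the operations on $\N$. Thus $A_1$ is a complete metric ring containing an isometric dense copy of $(\N,+,\times)$ whose operations extend those of $\N$, and by uniqueness of the completion this identifies $A_1$ with $A_3$ as topological rings; in particular the additive inverses absent from $\N$ appear in the limit, e.g. $-1=\lim_n(p^n-1)$, so $A_3$ is genuinely a ring.

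The bijectivity, continuity, and density verifications are routine; the main obstacle is the algebraic compatibility in the first step, namely showing that the carry-propagating base-$p$ multiplication of $A_2$ coincides with the coordinatewise reduction modulo $p^n$ of $A_1$. This reduces to the single arithmetical fact that for $a,b\in\N$ the residues $(a+b)\bmod p^n$ and $(ab)\bmod p^n$ depend only on $a\bmod p^n$ and $b\bmod p^n$, which is exactly what makes the inverse system well defined and simultaneously legitimises the digitwise arithmetic; once this is in hand, all three structures are identified.
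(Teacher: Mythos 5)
Your proof is correct, but there is nothing in the paper to compare it against: Proposition~\ref{p:Zp} sits in the Appendix as a recollection of classical material, stated without any proof and delegated to the cited references (Lenstra, Lang, Robert). What you have written is the standard argument that those references contain, and it is sound. The first half identifies the inverse limit with digit sequences via coherent base-$p$ expansions, observes that fixing the coordinate $x_n$ and fixing the first $n$ digits describe the same cylinders (hence a homeomorphism), and reduces the algebraic compatibility to the fact that $(a+b)\bmod p^n$ and $(ab)\bmod p^n$ depend only on $a\bmod p^n$ and $b\bmod p^n$ --- which is indeed the single fact that makes both the inverse system and the carry-propagating arithmetic on infinite digit strings well defined. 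The second half correctly verifies the three ingredients needed to invoke uniqueness of completion: the inverse-limit ultrametric restricts to the stated $p$-adic metric on the dense copy of $\N$, the inverse limit is complete (coordinates of a Cauchy sequence stabilize because each $\Z/p^n\Z$ is discrete), and the operations are uniformly continuous, the key point being the non-Archimedean estimate $d(xy,x'y')\leq\max\bigl(d(x,x'),d(y,y')\bigr)$, valid here because $xy\equiv x'y' \pmod{p^n}$ whenever $x\equiv x'$ and $y\equiv y'\pmod{p^n}$ (ordinary multiplication on an unbounded metric space would not be uniformly continuous, so flagging this is not pedantry). The isomorphism between the second and third constructions then follows by transitivity, as you say. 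In short: the paper buys this result by citation; your proposal buys it by a complete, self-contained proof, and the two hinge points you isolate are exactly the ones a referee would want spelled out.
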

Recall the factorial representation of integers.
\begin{lemma}\label{def:factorial}
Every positive integer $n$ has a unique representation as
$$
n=c_k k! + c_{k-1} (k-1)! +...+ c_2 2! + c_1 1!
$$
where $c_k\neq 0$ and $0\leq c_i \leq i$ for all $i=1,...,k$.
\end{lemma}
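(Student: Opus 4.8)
The plan is to prove existence and uniqueness separately, both resting on the telescoping identity $i\cdot i! = (i+1)! - i!$, which sums to $\sum_{i=1}^{k} i\cdot i! = (k+1)! - 1$. This identity pins down the range of numbers representable with highest digit at position $k$: since each coefficient ranges in $\{0,\ldots,i\}$, the largest such sum is exactly $(k+1)!-1$, while requiring $c_k\geq 1$ forces the sum to be at least $k!$. Hence any $n$ whose top index is $k$ satisfies $k!\leq n<(k+1)!$, and this inequality determines $k$ uniquely from $n$ because the factorials $1!<2!<3!<\cdots$ are strictly increasing and unbounded.

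First I would establish existence by strong induction on $n$. Given $n\geq 1$, let $k$ be the unique index with $k!\leq n<(k+1)!$ and set $c_k=\lfloor n/k!\rfloor$. Then $c_k\geq 1$ since $n\geq k!$, and $c_k\leq k$ since $n<(k+1)!=(k+1)\,k!$ gives $n/k!<k+1$. The remainder $n'=n-c_k\,k!$ satisfies $0\leq n'<k!$, so either $n'=0$ and we are done, or the induction hypothesis supplies a factorial representation of $n'$ whose digits all lie at positions strictly below $k$ (because $n'<k!$ forces its top index to be $<k$). Adjoining $c_k$ to the digits of $n'$ yields the desired representation of $n$.

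For uniqueness, suppose $n$ admits two representations with top indices $k$ and $k'$. By the boundary argument of the first paragraph, each of $k,k'$ equals the unique index with $k!\leq n<(k+1)!$, so $k=k'$. Writing both representations with this common top index $k$, the tail $\sum_{i=1}^{k-1}c_i\,i!$ is bounded by $\sum_{i=1}^{k-1} i\cdot i! = k!-1<k!$, so in each representation $c_k=\lfloor n/k!\rfloor$ is forced. Subtracting $c_k\,k!$ and inducting on the smaller remainder shows all remaining coefficients coincide.

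I expect no deep obstacle, as the result is classical. The one point demanding care is the bookkeeping in the uniqueness step: one must invoke the bound $\sum_{i=1}^{k-1} i\cdot i!<k!$ precisely to guarantee that the lower-order digits cannot overflow into position $k$, so that the leading coefficient is genuinely determined by $n$ alone. Once this is in place the induction closes immediately, and the analogous bound at each stage propagates the determinacy down to $c_1$.
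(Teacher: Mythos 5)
Your proof is correct. Note, however, that the paper offers no proof of this lemma at all: it appears in the Appendix as recalled classical background (the factorial number system), so there is nothing to compare your argument against. Your argument is the standard one, and it is sound: the telescoping identity $\sum_{i=1}^{k} i\cdot i! = (k+1)!-1$ correctly pins the top index $k$ to the unique interval $k!\leq n<(k+1)!$, the bound $\sum_{i=1}^{k-1} i\cdot i! = k!-1 < k!$ correctly forces $c_k=\lfloor n/k!\rfloor$ in any representation, and the strong induction on the remainder closes both existence and uniqueness.
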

%
%
%
\begin{proposition}\label{p:Zph}
The four following approaches lead to isomorphic structures,
called the topological ring $\Zhat$ of profinite integers.
\begin{itemize}
\item
The ring $\Zhat$ is the inverse limit of the following inverse system:
\begin{itemize}
\item
the family of rings $\Z/k\Z$ for $k\geq1$,
endowed with the discrete topology,
\item
the family of surjective morphisms
$\pi_{n,m}:\Z/n\Z \to \Z/m\Z$ for $m\divi n$.
\end{itemize}
\item
The ring $\Zhat$ is the inverse limit of the following inverse system:
\begin{itemize}
\item
the family of rings $\Z/k!\Z$ for $k\geq1$,
endowed with the discrete topology,
\item
the family of surjective morphisms
$\pi_{(n+1)!,n!}:\Z/n!\Z \to \Z/m!\Z$ for $n\geq m$.
\end{itemize}
\item
The ring $\Zhat$ is the set of infinite sequences
$\prod_{n\geq1}\{0,\ldots,n\}$
endowed with the product topology and addition and multiplication
which extend the obvious way to perform addition and multiplication
on factorial representations of natural integers.
\item
The ring $\Zhat$ is the Cauchy completion of the metric topological ring
$(\N,+,\times)$
relative to the following ultrametric: for $x\neq y\in\N$,
$d(x,x)=0$ and $d(x,y)=2^{-n}$ where $n$ is the maximum $k$
such that $k!$ divides $x-y$.
\item
The ring $\Zhat$ is the product ring $\prod_{p\text{ prime}}\Z_p$
endowed with the product topology.
\end{itemize}
\end{proposition}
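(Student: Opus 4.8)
The plan is to fix one concrete model as a reference and exhibit a chain of topological ring isomorphisms joining it to each of the other descriptions; label the five bulleted constructions $(a)$ through $(e)$ in the order listed, and take as reference $R=\underleftarrow{\lim}\,\Z/k\Z$, the limit of the divisibility-ordered system with the projections $\pi_{n,m}$ ($m\divi n$). The crucial simplification is that every one of the five candidate structures is compact and Hausdorff: an inverse limit of finite discrete rings is a closed subring of a product of finite spaces, hence compact; $\prod_{n}\{0,\ldots,n\}$ and $\prod_{p}\Z_p$ are products of compact spaces; and a Cauchy completion of a totally bounded metric space is compact. Consequently it suffices to produce in each case a continuous ring \emph{bijection}, since a continuous bijection between compact Hausdorff spaces is automatically a homeomorphism, so the topological half of each isomorphism comes for free once the algebraic half is in hand.

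First I would treat the passage to the factorial system, $(a)\Leftrightarrow(b)$. The factorials $(k!)_{k\geq1}$ are cofinal in $(\N_{\geq1},\divi)$: every $m$ divides some $k!$, and $k!\divi(k+1)!$. For inverse systems indexed by a directed set, restricting to a cofinal subsystem yields a canonically isomorphic limit, so $R\cong\underleftarrow{\lim}\,\Z/k!\Z$. For $(b)\Leftrightarrow(c)$ I would use the factorial representation of Lemma~\ref{def:factorial}: a coherent sequence $(x_k)$ with $x_k\in\Z/k!\Z$ and $x_{k-1}\equiv x_k\pmod{(k-1)!}$ is determined by a stable sequence of factorial digits $c_n\in\{0,\ldots,n\}$, which gives a bijection with $\prod_{n\geq1}\{0,\ldots,n\}$. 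Under this bijection the inverse-limit topology matches the product topology, and the coordinatewise operations defined by carrying in factorial base are exactly the ring operations inherited from the $\Z/k!\Z$; verifying the latter is the routine bookkeeping step, which I would settle by induction on the digit positions.

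For $(b)\Leftrightarrow(d)$ I would embed $\N$ into $R$ by $n\mapsto(n\bmod k!)_k$; this image is dense, the factorial ultrametric $d$ induces precisely the inverse-limit topology, and $R$ is complete because it is compact. Uniqueness of the Cauchy completion then identifies the completion of $(d)$ with $R$. Finally, for $(a)\Leftrightarrow(e)$ I would apply the Chinese Remainder Theorem in the form $\Z/k\Z\cong\prod_{p}\Z/p^{v_p(k)}\Z$, compatibly with the projections $\pi_{n,m}$, and pass to the limit: for each fixed $k$ the product is finite and inverse limits commute with finite products, while for each prime $p$ the exponents $v_p(k)$ are cofinal in $\N$, so the $p$-component limit is $\Z_p$. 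This yields $R\cong\prod_{p}\Z_p$, matching the classical accounts in \cite{lang,robert00,lenstra05a}.

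The main obstacle is this last step, $(a)\Leftrightarrow(e)$: one must interchange the inverse limit with the product over the infinitely many primes and check that the resulting algebraic bijection is continuous for the product topology on $\prod_{p}\Z_p$, not merely an abstract ring isomorphism. Here compactness is again decisive, since it lets me bypass a direct estimate on basic open sets and instead conclude that a continuous algebraic bijection between compact Hausdorff rings is a topological isomorphism. The digit-arithmetic verification in $(c)$ is conceptually trivial but notationally the heaviest part, so I would isolate it as a short lemma about factorial-base carrying rather than inline it.
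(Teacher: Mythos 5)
Your proposal is correct, but there is nothing in the paper to compare it against: Proposition~\ref{p:Zph} appears in the Appendix as recalled classical background, stated without proof and attributed to Lenstra \cite{lenstra05,lenstra05a}, Lang \cite{lang} and Robert \cite{robert00}. Your argument supplies a sound self-contained justification along the standard lines: the cofinality of $(k!)_{k\geq1}$ in the divisibility order handles $(a)\Leftrightarrow(b)$; the factorial-digit bijection of Lemma~\ref{def:factorial} handles $(b)\Leftrightarrow(c)$; density of $\N$ together with uniqueness of Cauchy completions handles $(b)\Leftrightarrow(d)$; and the Chinese Remainder Theorem applied systemwise handles $(a)\Leftrightarrow(e)$. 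The organizing device --- observing that all five structures are compact Hausdorff, so that any continuous ring bijection is automatically a topological isomorphism --- is a genuine economy, as it collapses all the topological verifications to one-sided continuity checks. Two small points deserve tightening. First, in the $(a)\Leftrightarrow(e)$ step you invoke ``inverse limits commute with \emph{finite} products,'' but the product over all primes is infinite (only cofinitely many factors $\Z/p^{v_p(k)}\Z$ are trivial for each fixed $k$); the clean statement you need is that limits commute with arbitrary products (a product being itself a limit), which holds and also yields the continuity you were worried about without any appeal to compactness. Second, in $(b)\Leftrightarrow(c)$ the ring-homomorphism property of the digit bijection is most painlessly obtained not by induction on carries but by noting that both sides are compact topological rings in which (the image of) $\N$ is dense and the operations agree there --- the same density argument you already use for $(d)$.
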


\begin{proposition}\label{p:Zp compact}
The topological rings $\Z_p$ and $\Zhat$ are compact and zero dimensional (i.e. they have a basis of closed open sets).
\end{proposition}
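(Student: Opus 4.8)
The plan is to derive both properties from the description of $\Z_p$ and $\Zhat$ as inverse limits of finite discrete rings (Propositions~\ref{p:Zp} and \ref{p:Zph}), with Tychonoff's theorem as the sole nontrivial external input. First I would note that each finite ring $\Z/p^n\Z$ (resp. $\Z/k\Z$ or $\Z/k!\Z$) carries the discrete topology, hence is compact and zero dimensional, a basis of clopen sets being its collection of singletons. The product $\prod_n \Z/p^n\Z$ (resp. $\prod_k \Z/k\Z$) is then compact by Tychonoff, and zero dimensional because the cylinders obtained by fixing finitely many coordinates to prescribed values are clopen (finite intersections of preimages of singletons under continuous projections, with complements of the same shape) and form a basis of the product topology.

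Next I would identify $\Z_p$ (resp. $\Zhat$) with the inverse limit inside this product, namely the set of coherent sequences $(x_n)$ satisfying $\pi_{n,m}(x_n)=x_m$ for all $m\leq n$. This set is closed in the product: each coherence condition equates two continuous maps into a discrete, hence Hausdorff, space and so cuts out a closed set, and the inverse limit is the intersection of these. Since a closed subspace of a compact space is compact, this yields compactness of $\Z_p$ and $\Zhat$; and zero dimensionality passes to any subspace, as the traces of the basic clopen cylinders remain clopen and still form a basis. This already settles both claims.

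Alternatively, and this is the route I would emphasize for transparency, one can read both properties off the ultrametric $d$. The strong triangle inequality $d(x,z)\leq\max(d(x,y),d(y,z))$ forces every ball to be clopen: if $d(x,y)\leq 2^{-n}$ then the balls of radius $2^{-n}$ about $x$ and about $y$ coincide, so each such ball is open, and its complement is a union of like balls, hence also closed. The balls $\{y : d(x,y)\leq 2^{-n}\}$ are precisely the sets of elements agreeing with $x$ in their first $n$ digits (in the base $p$, resp. factorial, representation), and these clopen sets form a basis; thus both rings are zero dimensional. Compactness then follows most cleanly from the representations $\Z_p\cong\{0,\ldots,p-1\}^\N$ and $\Zhat\cong\prod_{n\geq1}\{0,\ldots,n\}$ as products of finite discrete spaces, again by Tychonoff.

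The main obstacle, such as it is, lies not in any single deep step but in checking that the several topologies in play—the inverse-limit (subspace-of-product) topology, the Cantor/product topology on digit sequences, and the ultrametric topology—genuinely coincide, so that compactness and zero dimensionality established in one model transfer to the others. Once this identification is granted (it is part of the content of Propositions~\ref{p:Zp} and \ref{p:Zph}), the argument reduces to a routine application of Tychonoff's theorem together with the elementary fact that ultrametric balls are clopen.
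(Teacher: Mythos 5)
Your proof is correct, but there is nothing in the paper to compare it against: Proposition~\ref{p:Zp compact} appears in the Appendix as recalled classical background, stated without proof and delegated to the cited references (Lenstra, Lang, Robert). Both of your routes are the standard ones and are sound. The inverse-limit route is complete as written: the coherence conditions are equalizers of continuous maps into discrete (hence Hausdorff) targets, so the limit is closed in the Tychonoff-compact product, and zero dimensionality passes to subspaces since traces of clopen cylinders are clopen and basic. The ultrametric route is also fine, and your closing caveat is exactly the right point of care: compactness proved in the product-of-digits model and clopenness of balls proved in the metric model only combine because the three descriptions (inverse limit, digit sequences, Cauchy completion) carry the same topology, which is precisely what Propositions~\ref{p:Zp} and~\ref{p:Zph} assert and which the paper likewise takes from the literature rather than verifying. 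One minor remark: for these countable products of finite discrete spaces, full Tychonoff (and any appeal to choice) is not needed; a direct diagonal/K\"onig-type argument, in the spirit of the tree argument the paper uses in the proof of Theorem~\ref{thm:cp are inverse}, already gives compactness.
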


%
%
\begin{proposition}\label{p:embed N}
Let $\lambda:\N\to\Z_p$ (resp. $\lambda:\N\to\Zhat$)
be the function which maps $n\in\N$ to the element of $\Z_p$
(resp. $\Zhat$)
with base $p$ (resp. factorial) representation obtained by suffixing
an infinite tail of zeros to the base $p$ (resp. factorial) representation
of $n$.
\\
The function $\lambda$ is an embedding of the semiring $\N$ onto 
a topologically dense semiring in the ring $\Z_p$
(resp. $\Zhat$).
\end{proposition}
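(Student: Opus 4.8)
The plan is to unpack the statement into its three components---that $\lambda$ is a semiring homomorphism, that it is injective, and that $\lambda(\N)$ is topologically dense---and to prove all three by working with the inverse-limit descriptions of $\Z_p$ and $\Zhat$ from Proposition~\ref{p:Zp} and Proposition~\ref{p:Zph}, after first reconciling those descriptions with the digit description used to define $\lambda$. I treat $\Z_p$ in detail; the factorial case is identical mutatis mutandis.

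First I would identify $\lambda$ with the canonical map into the inverse limit. Under the isomorphism between the base-$p$ sequence model and the inverse limit ${\underleftarrow{\lim}}\,\Z/p^k\Z$, the $k$-th coordinate of an element is exactly its first $k$ base-$p$ digits; and for $n\in\N$ these are precisely the digits of $\pi_{p^k}(n)=n \bmod p^k$ (the representation of $n$ padded with zeros when $n<p^k$). Hence $\lambda(n)=(\pi_{p^k}(n))_{k\ge1}$, that is, $\lambda$ is the map induced by the family $\pi_{p^k}\segment\N\colon\N\to\Z/p^k\Z$. Since each $\pi_{p^k}$ is a ring homomorphism and the operations on the inverse limit are coordinatewise, $\lambda$ preserves $+$, $\times$, $0$ and $1$, so it is a semiring homomorphism. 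Injectivity is then immediate: if $m\neq n$, choose $k$ with $p^k>\max(m,n)$, so that $\pi_{p^k}(m)\neq\pi_{p^k}(n)$ and therefore $\lambda(m)\neq\lambda(n)$.

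For density, fix $x=(x_k)_{k\ge1}\in\Z_p$ and $k\ge1$, and set $n=\iota_{p^k}(x_k)\in\{0,\ldots,p^k-1\}$. Because $\pi_{p^k}\circ\iota_{p^k}$ is the identity, the $k$-th coordinate of $\lambda(n)$ equals $x_k$, so $x-\lambda(n)$ lies in $p^k\Z_p$, i.e. $p^k$ divides $x-\lambda(n)$ and $d(x,\lambda(n))\le 2^{-k}$. As $k$ is arbitrary, every neighbourhood of $x$ meets $\lambda(\N)$, so $\lambda(\N)$ is dense. The argument for $\Zhat$ is word-for-word the same once $p^k$ is replaced by $k!$ and $\pi_{p^k}$ by $\pi_{k!}$, using uniqueness of factorial representations (Lemma~\ref{def:factorial}) for injectivity and the factorial ultrametric for density.

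The only genuinely delicate point is the identification in the second step: one must be sure that the arithmetic defined on infinite digit strings really does restrict to ordinary integer arithmetic on the eventually-zero strings, equivalently that the digit model and the inverse-limit model of Proposition~\ref{p:Zp} agree on $\lambda(\N)$. This compatibility is exactly what is asserted in the first two bullets of Proposition~\ref{p:Zp} (respectively Proposition~\ref{p:Zph}), so once those identifications are invoked the homomorphism property is automatic and the remaining steps are routine.
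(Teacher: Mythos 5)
Your proof is correct. Note, however, that the paper itself offers no proof of Proposition~\ref{p:embed N}: it appears in the appendix as a recalled classical fact, with the references to Lenstra, Lang and Robert standing in for an argument, so there is no authorial proof to compare against. Your argument is the standard one and it is complete: identifying $\lambda$ with the canonical map $n\mapsto(\pi_{p^k}(n))_{k\ge1}$ into ${\underleftarrow{\lim}}\,\Z/p^k\Z$ makes the homomorphism property automatic (coordinatewise operations), injectivity follows by choosing $k$ with $p^k>\max(m,n)$, and density follows since $x$ and $\lambda(\iota_{p^k}(x_k))$ agree modulo $p^k$, so $d\bigl(x,\lambda(\iota_{p^k}(x_k))\bigr)\le 2^{-k}$. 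You are also right to flag, and then discharge via Proposition~\ref{p:Zp} (resp.\ Proposition~\ref{p:Zph}), the one genuinely delicate point: that the carrying arithmetic on digit strings agrees with ordinary integer arithmetic on eventually-zero strings, i.e.\ that the digit model and the inverse-limit model may be identified. The factorial case does go through word for word with $k!$ in place of $p^k$, using Lemma~\ref{def:factorial} for uniqueness of representations.
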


\begin{remark}\label{rk:opposite}
In the base $p$ representation,
the opposite of an element $f\in\Z_p$ is the element $-f$ such that,
for all $m\in\N$,
$$
(-f)(i) = \left\{\begin{array}{ll}
0 & \textit{\quad if \ }\forall s\leq i\ f(s)=0\,,\\
p-f(i) & \textit{\quad if \ $i$ is least such that $f(i)\neq0$}\,,\\
p-1-f(i) & \textit{\quad if \ }\exists s<i\ f(s)\neq0\,.
\end{array}\right.
$$
In particular, 
\\-- 
Integers in $\N$ correspond in $\Z_p$ to infinite base $p$
representations with a tail of $0$'s.
\\--
Integers in $\Z\setminus\N$ correspond in $\Z_p$ to infinite base $p$
representations with a tail of digits $p-1$.

\noindent Similar results hold for the infinite factorial representation
of profinite integers.
\end{remark}


\begin{thebibliography}{}


\bibitem{benois}
{\sc M. Benois}, Parties Rationnelles du Groupe Libre,  {\em C. R. Acad. Sci. Paris S\'erie A}, 269, pp 1188-1190, 1969.

\bibitem{cgg14ipl}
{\sc P. C\'egielski and S. Grigorieff and I. Guessarian},
 {\em On Lattices of Regular Sets of Natural Integers
Closed under Decrementation}, {\em Information Processing Letters} 
114(4):197-202, 2014.

\bibitem{cgg14}
{\sc P. Cegielski, S. Grigorieff, I. Guessarian}, Newton representation of functions over natural integers having integral difference ratios.
To be published in Int. J. Number Theory.
Preliminary version on arXiv, 2013.

\bibitem{cgg14a}
{\sc P. C\'egielski and S. Grigorieff and I. Guessarian},
 {\em Integral Difference Ratio functions on Integers},  LNCS 8808,  {Computing with new resources,  Festschrift for Jozef Gruska}, C, Calude, R. Freivalds, I. Kazuo (Eds.), Springer,  (2014), p. 277--291.

\bibitem{cgg15}
{\sc P. C\'egielski, S. Grigorieff, I. Guessarian}, {\em Characterizing congruence preserving functions $\Z/n\Z\to\Z/m\Z$ via rational polynomials},
Submitted, 2015.



\bibitem{lang}
{\sc  S. Lang}, {\em  Algebra 3rd ed.}, Springer, 2002.

\bibitem{lenstra05}
{\sc  H.W.~Lenstra},
\newblock Profinite Fibonacci numbers.
\newblock {\em Nieuw Arch. Wiskd.},
(5) 6, n.4:297--300, 2005.

\bibitem{lenstra05a}
{\sc  H.W.~Lenstra},
\newblock Profinite groups.
\newblock {\em Lecture notes available on the web.}


\bibitem{mahler56}
{\sc  K.~Mahler},
\newblock An Interpolation Series for Continuous Functions
of a p-adic Variable.
\newblock {\em Journal f\"ur die reine und angewandte Mathematik},
199:23--34, 1956.

\bibitem{PinSilva2011}
{\sc J.-\'E. Pin and P.V. Silva},
On profinite uniform structures defined by varieties of finite monoids,
{\em International Journal of Algebra and Computation},
21:295-314, 2011.

%



\bibitem{robert00} {\sc   A. Robert},
{\em  A course in $p$-adic analysis}, 
Springer, 2000.


\bibitem{rosen84}
{\sc   K. Rosen},
{\em  Elementary number theory and its applications}, 
Addison,-Wesley, 1984.


\end{thebibliography}
\end{document}